\numberwithin{equation}{section}
\newtheorem{Theorem}{Theorem}[section]
\newtheorem{theorem}{Theorem}[section]
\newtheorem{remark}[theorem]{Remark}
\newtheorem{corollary}[theorem]{Corollary}
\newtheorem{defi}{\hskip\parindent {Definition}}[section]
\newtheorem{lemma}[theorem]{Lemma}
\newcommand{\ct}[1]{\langle {#1}\rangle \lower.3ex\mathrm{$_{t}$}}
\newcommand{\lt}[1]{[ {#1}] \lower.3ex\mathrm{$_{t}$}}
\newcommand{\BBB}{\mathcal{B}}
\newcommand{\SSS}{\mathcal{S}}
\numberwithin{equation}{section}
\begin{document}

	\title{Calder\'on's commutator on Stratified Lie groups}
	\author{Yanping Chen}
	\address{Yanping Chen, Department of Applied Mathematics, School of Mathematics and Physics,
		University of Science and Technology Beijing,
		Beijing 100083, China
	}
	\email{ yanpingch@126.com}
	
	\author{Zhenbing Gong}
	\address{\color{red}Zhenbing Gong, Department of Applied Mathematics, School of Mathematics and Physics,
		University of Science and Technology Beijing,
		Beijing 100083, China
	}
	\email{ gongzhenb@126.com }
	
	\author{Ji Li}
	\address{Ji Li, Department of Mathematics, Macquarie University, NSW 2109, Australia}
	\email{ji.li@mq.edu.au}
	
	\author{Edward McDonald}
	\address{Edward McDonald, Department of Mathematics, Penn State University, University Park, PA 16802, USA}
	\email{eam6282@psu.edu}
	
	\author{Dmitriy Zanin}
	\address{Dmitriy Zanin, School of Mathematics and Statistics, UNSW, Kensington, NSW 2052, Australia}
	\email{d.zanin@unsw.edu.au}

	\date{}
	
	\begin{abstract}
		Motivated by the recent work of Gimperlein and Goffeng on Calder\'on's commutator on compact Heisenberg type manifolds and the related weak Schatten class estimates, 
		we establish the characterisation of $L^p$ boundedness for Calderon's commutator on stratified Lie groups. We further study related weak Schatten class estimates for second order commutators on two step stratified Lie groups, which include the Heisenberg groups. This latter result is obtained using double operator integral techniques which are novel in this area.	
	\end{abstract}

	%	\maketitle
	%	\bigbreak

	\begin{center}
	\end{center}

	\renewcommand{\thefootnote}{}
	
	\footnote{2010 \emph{Mathematics Subject Classification}:
		42B20; 22E25, 35R03.}
	
	\footnote{\emph{Key words and phrases}: Calder\'on's commutator, stratified Lie groups, weak Schatten class}

	\maketitle

	\arraycolsep=1pt
	
	\section{Introduction}
	
	Given a suitable function $b $, we denote by $M_b$ the multiplication operator defined by 
	$
	M_bf = b\cdot f
	$ for any measurable function $f.$ The commutator $[M_b, T ]$  formed by the singular integral operator $T$  and  $M_b$, which is defined by
	$[M_b, T ] f= M_{b}(T f) -T (M_{b} f )$,
	has played a vital role in harmonic analysis, complex analysis and partial differential equations. See for example \cite{C1965,CRW1976} .
	In 1965, Calder\'{o}n \cite{C1965} investigated the following operator on the real line $\mathbb{R}$ (now known as Calder\'{o}n's commutator)
	\begin{align*}
		\Big[M_b,\frac{d}{dx}H\Big]f(x):=\text{p.v.}\frac{1}{\pi}\int_{\mathbb{R}}\frac{b(x)-b(y)}{(x-y)^2}f(y)dy,
	\end{align*}
	and proved that $b\in {\rm Lip}(\mathbb{R})$ (the Lipschitz space on $\mathbb{R}$) is sufficient for the $ L^{p} (\mathbb{R})$ $(1<p<\infty)$ boundedness of  $[M_b,\frac{d}{dx}H]$. 
	
	Moreover, 
	Calder\'{o}n \cite{C1965} 
	proved that $[M_b,T_1]$ is bounded on $ L^{p}(\mathbb{R}^n)$  ($1 < p < \infty$) if $b\in {\rm Lip}(\mathbb R^n)$, where 
	\begin{align}	T_1f(x):=\text{p.v.}\int_{\mathbb{R}^n}\frac{\Omega(x-y)}{|x-y|^{n+1}}f(y)dy
	\end{align}
	with $\Omega\in L{\rm log}^{+}L(\mathbb S^{n-1})$ homogeneous of degree zero, and satisfying the vanishing moment condition of order one on the unit sphere $\mathbb S^{n-1}$:
	$$ \int_{\mathbb{S}^{n-1}}\Omega(x)x^{\alpha}d\sigma(x)=0$$
	for all multi-indices $\alpha$ of magnitude $|\alpha|=1$. Well-known examples 
	include $T_1 = \frac{\partial}{\partial \mathrm{X}_j}R_k$, where $R_k$ denotes the $k$-th Riesz transform on $\mathbb{R}^n$. 
	Calder\'{o}n's result has inspired various mathematicians to find different proofs, and to find further applications, see for example \cite{C1980,Co1981,CM1975,Ho1994,MD2017,CDH2016}.

	Recently, Gimperlein and Goffeng \cite{GG} studied Calder\'on's commutator for a class of pseudo differential operators of order one on certain compact Heisenberg manifolds, establishing the boundedness when the symbol $b$ belong to the corresponding Lipschitz space. Moreover, they studied the commutator of pseudo differential operators of order zero and obtained the weak Schatten class estimate at the critical index  when the symbol $b$ is in $C^\infty$. 
	
	Motivated by Gimperlein and Goffeng's work, 
	it is natural to explore  Calder\'on's commutators in a general setting, the stratified Lie groups. We aim to characterise the boundedness of Calder\'on's commutator $[M_b,\nabla R_j]$ on stratified nilpotent Lie group $\mathcal{G}$, where {$\nabla b \in  L^{\infty}(\mathcal{G})$  with $\nabla$ the gradient operator and $R_j$ is the $j$-th Riesz transform for $j=1,\ldots,n$. 
		We denote $\| b\|_{\operatorname{Lip}(\mathcal{G})}:=\|\nabla b\|_{L^{\infty}(\mathcal{G})} $. } The key step is to show that $[M_b,\nabla R_j]$ is a Calder\'on--Zygmund operator of order zero. Moreover, we explore the weak Schatten estimate of $[M_a,[M_b,\nabla R_j]]$ at the critical index for $a$ belonging to a corresponding homogeneous Sobolev space. We refer to some previous related work on $[M_b, R_j]$ on stratified Lie groups \cite{CDLW2019,DLLW2019, FLL, FLMSZ,MSZ_cwikel, LXY}.

	We now state our results explicitly.
	Suppose $\mathcal{G}$ is a stratified nilpotent Lie group of homogeneous dimension $\mathbb Q$, and let $\{\mathrm{X}_{j}\}_{1 \leq j \leq n}$ be a basis for the left-invariant vector fields of degree one on $\mathcal{G}.$ Let $\Delta=\sum_{j=1}^{n} \mathrm{X}_{j}^{2}$ be the sub-Laplacian on $\mathcal{G}.$ The $j$th Riesz transform on $\mathcal{G}$ is defined formally by
	$R_{j}:=\mathrm{X}_{j}(-\Delta)^{-\frac{1}{2}}.$ 
	We write $\nabla = (\mathrm{X}_1,\ldots,\mathrm{X}_n)^\top$ for the gradient operator.

	The first main result of this paper is as follows.
	\begin{Theorem}\label{Thom1}
Suppose $1<p<\infty$ and $j=1,2, \ldots, n$. If $b \in \operatorname{Lip}(\mathcal{G})$, then 
		$$
		\|[M_b,\nabla R_j]f\|_{L^{p}(\mathcal{G})}\leq c_{p,\mathcal{G}}\| b\|_{\operatorname{Lip}(\mathcal{G})}\|f\|_{L^{p}(\mathcal{G})}.
		$$
{On the other hand, if b is real valued and $[M_b,\nabla R_j]$ is bounded on $L^p(\mathcal G)$, then $b \in \operatorname{Lip}(\mathcal{G})$.}

	\end{Theorem}
{\begin{remark}
	The lower $L^2(\mathbb{R})$ bound for Theorem \ref{Thom1} for the Calder\'on's commutator on $\mathbb R$ is showed in \cite{Murray1985}.
\end{remark}}

	By noting that  $[M_b, \nabla]f=-(\nabla b)f$ whenever $b$ is differentiable, 
	and that by the Leibniz rule $$
	\nabla[M_b,R_{j}]=[M_b,\nabla R_{j}] -[M_b, \nabla]R_{j},$$
	it is easy to obtain the following result.
	\begin{corollary}
		Suppose $1<p<\infty$ and $j=1,2, \ldots, n$. If $b \in \operatorname{Lip}(\mathcal{G})$, then
		$$
		\|\nabla[M_b,  R_{j}] f\|_{L^{p}(\mathcal{G})}\leq c_{p,\mathcal G}\| b\|_{\operatorname{Lip}(\mathcal{G})}\|f\|_{L^{p}(\mathcal{G})}.
		$$
	\end{corollary}
	
	Similarly, using $(-\Delta)^{\frac12} = \sum_{j=1}^n -\mathrm{X}_jR_j,$ we deduce the following.
	\begin{corollary}\label{square root of delta corollary}
		If  $b \in \operatorname{Lip}(\mathcal{G})$, then the commutator $[(-\Delta)^{\frac12},M_b]$ is bounded on $L^p(\mathcal{G})$ for all $1<p<\infty$ with norm at most a constant multiple of $\|b\|_{\mathrm{Lip}(\mathcal{G})}.$
	\end{corollary}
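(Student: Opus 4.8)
The plan is to obtain the corollary as a direct consequence of Theorem~\ref{Thom1} (more precisely, of the corollary to it) together with the algebraic identity
\[
(-\Delta)^{\frac12} \;=\; -\sum_{j=1}^n \mathrm{X}_j R_j ,
\]
which holds because $\sum_{j}\mathrm{X}_jR_j=\sum_j \mathrm{X}_j^2(-\Delta)^{-\frac12}=\Delta(-\Delta)^{-\frac12}=-(-\Delta)^{\frac12}$. I will work first on a convenient dense subspace of $L^p(\mathcal{G})$ (for instance $C_c^\infty(\mathcal{G})$, or the Schwartz class on $\mathcal{G}$), and extend by density at the end. I will also use two standard facts: the Riesz transforms $R_j$ are bounded on $L^p(\mathcal{G})$ for $1<p<\infty$, and $\|\mathrm{X}_j b\|_{L^\infty(\mathcal{G})}\lesssim\|b\|_{\operatorname{Lip}(\mathcal{G})}$.

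For each $j$, inserting $\pm\,\mathrm{X}_jM_bR_j$ and using the Leibniz rule $[\mathrm{X}_j,M_b]=M_{\mathrm{X}_jb}$ (equivalently $[b,\nabla]f=-(\nabla b)f$, as noted in the paper) gives
\[
[\mathrm{X}_jR_j,M_b]=\mathrm{X}_j[R_j,M_b]+[\mathrm{X}_j,M_b]R_j=-\mathrm{X}_j[b,R_j]+M_{\mathrm{X}_jb}R_j .
\]
Summing over $j$ and invoking the identity for $(-\Delta)^{\frac12}$ yields
\[
[(-\Delta)^{\frac12},b]=\sum_{j=1}^n \mathrm{X}_j[b,R_j]-\sum_{j=1}^n M_{\mathrm{X}_jb}R_j .
\]
Now each $\mathrm{X}_j[b,R_j]f$ is a component of $\nabla[b,R_j]f$, so the corollary to Theorem~\ref{Thom1} gives $\|\mathrm{X}_j[b,R_j]f\|_{L^p(\mathcal{G})}\lesssim\|b\|_{\operatorname{Lip}(\mathcal{G})}\|f\|_{L^p(\mathcal{G})}$; and $\|M_{\mathrm{X}_jb}R_jf\|_{L^p(\mathcal{G})}\le\|\mathrm{X}_jb\|_{L^\infty(\mathcal{G})}\|R_jf\|_{L^p(\mathcal{G})}\lesssim\|b\|_{\operatorname{Lip}(\mathcal{G})}\|f\|_{L^p(\mathcal{G})}$ by the $L^p$-boundedness of $R_j$. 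Summing the $2n$ estimates and passing from the dense class to all of $L^p(\mathcal{G})$ finishes the argument.

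I do not anticipate a serious obstacle here, since the statement is essentially a formal corollary of Theorem~\ref{Thom1}. The only points that deserve a line or two of care are the justification of the operator identity $(-\Delta)^{\frac12}=-\sum_j\mathrm{X}_jR_j$ and of the commutator manipulations above on a dense domain, together with recalling the (standard) $L^p$-boundedness of the sub-Laplacian Riesz transforms $R_j$ on $\mathcal{G}$ and the comparability $\|\nabla b\|_{L^\infty(\mathcal{G})}\lesssim\|b\|_{\operatorname{Lip}(\mathcal{G})}$.
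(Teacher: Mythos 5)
Your proposal is correct and follows essentially the same route as the paper, which deduces the corollary from the identity $(-\Delta)^{\frac12}=-\sum_{j=1}^n \mathrm{X}_jR_j$ together with the already-established commutator bounds. Your detour through the decomposition $[\mathrm{X}_jR_j,M_b]=-\mathrm{X}_j[b,R_j]+M_{\mathrm{X}_jb}R_j$ and the first corollary is harmless but unnecessary, since one can write directly $[(-\Delta)^{\frac12},b]=\sum_{j=1}^n[b,\mathrm{X}_jR_j]$ and apply the upper bound of Theorem \ref{Thom1} to each summand.
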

	
	Next, we point out that our approach also works for certain higher order commutators. We state the result for the second-order commutator $[b,[b,\mathrm{X}_{\ell}R_{j}\mathrm{X}_{i}]]$.

	\begin{corollary}\label{cor2}
	{	Suppose  $1<p<\infty$ and $i,j,\ell=1,2, \ldots, n$. 
	If  $ b_1, b_2  \in \operatorname{Lip}(\mathcal{G})$, then 
	$$
	\|[M_{b_1},[M_{b_2},\mathrm{X}_{\ell}R_{j}\mathrm{X}_{i}]]\|_{L^{p}(\mathcal{G})\to L^{p}(\mathcal{G})}\leq c_{p,\mathcal G}\| b_1\|_{\operatorname{Lip}(\mathcal{G})}\| b_2\|_{\operatorname{Lip}(\mathcal{G})}.
	$$ 
	On the other hand, if $b_1$ and $b_2$ are real valued and if $[M_{b_1},[M_{b_2},\mathrm{X}_{\ell}R_{j}\mathrm{X}_{i}]]$ is bounded on $L^p(\mathcal G)$, then $b_1,b_2 \in \operatorname{Lip}(\mathcal{G})$.}
	\end{corollary}
	
	We also remark that if $b$ is in the closure of $C_0^\infty(\mathcal G)$ under the $\operatorname{Lip}(\mathcal G)$, then $[M_b,\nabla R_j]$, $j=1,2,\ldots,n$, is a compact Calder\'on--Zygmund operator. This can be deduced from the work of P. Villarroya \cite{V}.

		There has been some recent interest in the boundedness of higher order Calder\'on commutators, that is
		\[
		[M_{b_1},[M_{b_2},[,\ldots,[M_{b_n},\nabla R_j]]]]
		\]
		for appropriate functions $b_1,\ldots,b_n$ \cite{Lai}. In this theme, we consider the 		Schatten class of the commutator $[M_{b_{1}},[M_{b_2},\nabla R_j]]$ on $\mathcal G.$ 
	
	The degree of compactness can be measured by the scale of Schatten-von Neumann classes. Recall
	that if $T$ is a compact linear operator on a Hilbert space $H$, then the $(n+1)$th singular value of $T$ is the distance of $T$ from the space of rank $n$ operators. That is,
	\[
	\mu(n,T) = \inf\big\{\|T-R\|_{H\to H} \;:\;\mathrm{rank}(R)\leq n\big\}.
	\]
	Equivalently, $\mu(n,T)$ is the $(n+1)$th eigenvalue of the absolute value $|T|,$ arranged in non-increasing order with multiplicities. A compact operator $T$ on a Hilbert space is said to belong
	to $\SSS^{p,\infty}$ if
	\[
	\mu(n,T) = O((n+1)^{-\frac1p}),\quad n\to \infty,
	\]
	that is,
	\[
	\|T\|_{p,\infty} := \sup_{n\geq 0}\, (n+1)^{\frac{1}{p}}\mu(n,T) < \infty.
	\]
	Sufficient conditions for a second order commutator $[M_{b_1},[M_{b_2},\mathrm{X}_{k_1}R_{k_2}]]$ to belong to a Schatten-von Neumann ideal
	can be expressed using Sobolev spaces. The homogeneous Sobolev space  $\dot{W}^{k,p}(\mathcal{G})$ is defined for $1\leq p\leq \infty$ and $k=1,2,3,\ldots$ as the space of all distributions $u$ on $\mathcal{G}$ such that
	\[
	\mathrm{X}_{i_1}\cdots \mathrm{X}_{i_j}u \in L^p(\mathcal{G}),\quad 1\leq j\leq k
	\]
	for all $i_1,\ldots,i_k \in\{ 1,\ldots,n\}$ with corresponding semi-norm
	\[
	\|u\|_{\dot{W}^{k,p}(\mathcal{G})} := \sum_{j=1}^k\sum_{1\leq i_1,\ldots,i_j\leq n} \|\mathrm{X}_{i_1}\cdots \mathrm{X}_{i_n}u\|_{L^p(\mathcal{G})}.
	\]
	For further details see \cite[Section 4]{Folland1975}.

	\begin{Theorem}\label{main3}
		Suppose that $\mathcal{G}$ is a two-step stratified Lie group with homogeneous dimension $\mathbb Q>2$ as above, and and $k_1,k_2=1,2, \ldots, n.$ If
		$$
		b_1 \in \dot{W}^{1,\mathbb Q}(\mathcal{G}),\quad b_2 \in \dot{W}^{1,\infty}(\mathcal{G})\cap \dot{W}^{2,\mathbb Q}(\mathcal{G}),
		$$
		then the double commutator $[M_{b_1},[M_{b_2}, \mathrm{X}_{k_1} R_{k_2}]]$ belongs to the weak Schatten class $\SSS^{\mathbb Q,\infty}.$
		Moreover, we have the estimate
		\[
		\|[M_{b_1},[M_{b_2},\mathrm{X}_{k_1}R_{k_2}]]\|_{\mathbb Q,\infty} \lesssim \|b_1\|_{\dot{W}^{1,\mathbb Q}}(\|b_2\|_{\dot{W}^{2,\mathbb Q}(\mathcal{G})}+\|b_2\|_{\dot{W}^{1,\infty}(\mathcal{G})}),
		\]
		where the implicit constant is independent of $b_1$ and $b_2.$
	\end{Theorem}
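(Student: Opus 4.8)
The plan is to realise the double commutator $[b_1,[b_2,\mathrm{X}_{k_1}R_{k_2}]]$ as an explicit operator-valued expression and then estimate it term by term using the tools of double operator integration. First I would record the algebraic identity for the double commutator: writing $T = \mathrm{X}_{k_1}R_{k_2}$, one has
\[
[b_1,[b_2,T]] = [b_1,M_{b_2}]\,T - T\,[b_1,M_{b_2}] + \text{(terms in which } b_1,b_2 \text{ are separated)},
\]
but a cleaner route is to use that $[b_2,T]$ is itself a Calder\'on--Zygmund operator of order $-1$ (morally $[b_2,T]\sim M_{\nabla b_2}\circ |\Delta|^{-1/2}\cdot(\text{bounded})$ up to $L^p$-bounded error, which is where Theorem \ref{Thom1} and its corollaries enter), so that the remaining commutator with $b_1$ is a commutator of $M_{b_1}$ with an order $-1$ operator. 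The heuristic is that $[b_1,[b_2,T]]$ behaves like $M_{\nabla b_1}\cdot(-\Delta)^{-1/2}\cdot M_{\nabla b_2}\cdot(-\Delta)^{-1/2}\cdot(\text{bounded})$, an operator of order $-2$, and on a space of homogeneous dimension $\mathbb{Q}$ such an operator with the two multipliers in $L^{\mathbb{Q}}$ should land in $\SSS^{\mathbb{Q},\infty}$ by a Cwikel-type estimate.

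The key steps, in order, would be: (1) Reduce, via the Leibniz-type identities already used to derive Corollaries \ref{square root of delta corollary} and \ref{cor2} together with Theorem \ref{Thom1}, the operator $[b_1,[b_2,\mathrm{X}_{k_1}R_{k_2}]]$ to a finite sum of terms of the form $M_{g_1}\, A_1\, M_{g_2}\, A_2$ where each $g_i$ is a first derivative $\mathrm{X}_m b_i$ (so $g_1\in L^{\mathbb{Q}}$, and $g_2\in L^\infty\cap$ something controlled by $\|b_2\|_{\dot W^{2,\mathbb{Q}}}$), and each $A_i$ is either $(-\Delta)^{-1/2}$ composed with a zeroth-order CZO, or a Riesz-type operator; the second derivatives of $b_2$ appear because commuting $\mathrm{X}_{k_1}$ past $M_{b_2}$ produces $M_{\mathrm{X}_{k_1}b_2}$. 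Here the two-step hypothesis is used to ensure the algebra of vector fields closes nicely so that these rearrangements stay within operators built from $(-\Delta)^{-1/2}$ and bounded CZOs. (2) Establish the core \emph{Cwikel/Birman--Schwinger estimate}: on a two-step stratified group of homogeneous dimension $\mathbb{Q}$, the operator $M_g (1-\Delta)^{-1}$ (and more generally $M_g\circ(\text{order } -2 \text{ operator})$) belongs to $\SSS^{\mathbb{Q}/2,\infty}$ with $\|\cdot\|_{\mathbb{Q}/2,\infty}\lesssim \|g\|_{L^{\mathbb{Q}}}$ — or, in the factored form needed here, $M_{g_1}(-\Delta)^{-1/2}\in\SSS^{\mathbb{Q},\infty}$ when $g_1\in L^{\mathbb{Q}}$. (3) Use the double operator integral (DOI) machinery to handle the composition: write the inner commutator $[b_2,T]$ as a DOI $T^{\Delta,\Delta}_{\phi}(\mathrm{X}_m b_2)$ for a suitable symbol $\phi$ so that it factors through $(1-\Delta)^{-1/2}$, apply the Schatten-class DOI boundedness theorem, and then combine with step (2) and a H\"older inequality for weak Schatten ideals, $\SSS^{\mathbb{Q},\infty}\cdot\SSS^{\infty}\subset\SSS^{\mathbb{Q},\infty}$ and $\SSS^{2\mathbb{Q},\infty}\cdot\SSS^{2\mathbb{Q},\infty}\subset\SSS^{\mathbb{Q},\infty}$, to assemble the final bound.

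I expect the main obstacle to be step (3): making the double operator integral representation of the nested commutator rigorous on a stratified group, in particular identifying the divided-difference symbol associated with the operator $\mathrm{X}_{k_1}R_{k_2}=\mathrm{X}_{k_1}\mathrm{X}_{k_2}(-\Delta)^{-1/2}$ relative to the sub-Laplacian, and verifying that this symbol lies in the correct symbol class (e.g.\ that it is the Fourier transform of a finite measure, or satisfies an integral-projective decomposition) so that the DOI is bounded from $L^\infty$ into the required Schatten ideal. The two-step assumption enters crucially here because the Rockland/sub-Laplacian functional calculus and the structure of the kernel of $\mathrm{X}_{k_1}R_{k_2}$ are explicit enough in that case; in higher step generality the relevant symbol estimates are less transparent. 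A secondary technical point is the passage from the homogeneous Sobolev conditions on $b_1,b_2$ to the $L^{\mathbb{Q}}$ and $L^\infty$ membership of their derivatives as required by steps (1)--(2), and tracking that the second-order term $\|b_2\|_{\dot W^{2,\mathbb{Q}}}$ is precisely what is needed to absorb the extra vector field $\mathrm{X}_{k_1}$; this is routine but must be done carefully to get the stated product bound $\|b_1\|_{\dot W^{1,\mathbb{Q}}}(\|b_2\|_{\dot W^{2,\mathbb{Q}}}+\|b_2\|_{\dot W^{1,\infty}})$.
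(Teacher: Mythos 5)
Your overall architecture (explicit algebraic decomposition of the double commutator, Cwikel-type estimates $M_g(-\Delta)^{-\mathbb Q/2p}\in\SSS^{p,\infty}$, and H\"older-type multiplication of weak Schatten ideals) matches the paper's proof in spirit, but the proposal has a genuine gap at exactly the point where the paper has to work hardest. You treat the inner commutator as $M_{\nabla b_2}(-\Delta)^{-1/2}\cdot(\text{bounded})$ ``up to an $L^p$-bounded error,'' but for a weak Schatten estimate a merely bounded error is not negligible: you must still show that the commutator of $M_{b_1}$ with that remainder lies in $\SSS^{\mathbb Q,\infty}$. Concretely, the paper writes
\[
[M_{b_2},\mathrm{X}_{k_1}\mathrm{X}_{k_2}(-\Delta)^{-\frac12}] = M_{\mathrm{X}_{k_1}b_2}R_{k_2}+\mathrm{X}_{k_1}M_{\mathrm{X}_{k_2}b_2}(-\Delta)^{-\frac12}+\mathrm{X}_{k_1}R_{k_2}[(-\Delta)^{\frac12},M_{b_2}](-\Delta)^{-\frac12},
\]
and commuting $M_{b_1}$ through the third term forces two ingredients your plan never identifies: (i) the operator-norm bound for $(-\Delta)^{\frac12}[(-\Delta)^{\frac12},M_{b_2}](-\Delta)^{-\frac12}$ (Lemma \ref{doi_lemma}\eqref{doi_lemma_1}), which is precisely where the hypothesis $b_2\in\dot W^{2,\mathbb Q}(\mathcal{G})\cap\dot W^{1,\infty}(\mathcal{G})$ is consumed; and (ii) the weak Schatten bound $\|[M_{b_1},\mathrm{X}_{k_1}\mathrm{X}_{k_2}(-\Delta)^{-1}]\|_{\mathbb Q,\infty}\lesssim\|b_1\|_{\dot W^{1,\mathbb Q}(\mathcal{G})}$ (Lemma \ref{second_order_riesz_commutator_lemma}). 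The latter is where the two-step hypothesis actually enters: centrality of $[\mathrm{X}_k,\mathrm{X}_{k_2}]$ gives $[\Delta,\mathrm{X}_{k_2}]=2\sum_k\mathrm{X}_k[\mathrm{X}_k,\mathrm{X}_{k_2}]$, so the second-order Riesz transform $\mathrm{X}_{k_1}\mathrm{X}_{k_2}(-\Delta)^{-1}$ is a finite algebraic combination of first-order Riesz transforms, and its commutator with $M_{b_1}$ is then controlled by \eqref{FLMSZ_sufficiency}. Your stated reason for the two-step assumption (``the algebra of vector fields closes nicely,'' explicit functional calculus) does not pinpoint this, and without (i)--(ii) your step (1) reduction does not go through.

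The second problem is your step (3). You propose to realise $[b_2,\mathrm{X}_{k_1}R_{k_2}]$ as a double operator integral with a divided-difference symbol of the sub-Laplacian, and you flag the symbol-class verification as the main obstacle without resolving it. This is not merely hard; it is the wrong tool here, since $\mathrm{X}_{k_1}R_{k_2}$ is not a function of $-\Delta$, so identities of the type \eqref{lowner_identity} do not apply to it and no such divided-difference representation is available. The paper avoids this entirely: the operator-theoretic inputs are Corollary \ref{square root of delta corollary} (boundedness of $[(-\Delta)^{\frac12},b]$, itself a consequence of Theorem \ref{Thom1}), the Cwikel estimates of Theorem \ref{cwikel_estimates}, the commutator estimate \eqref{FLMSZ_sufficiency}, and Lemma \ref{second_order_riesz_commutator_lemma}, assembled by purely Leibniz-rule algebra; double operator integrals play only a peripheral role. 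Finally, the paper closes with a density argument (first taking $b_1,b_2\in C_c^\infty(\mathcal{G})$ and then using density in $\dot W^{1,\mathbb Q}(\mathcal{G})$), which your proposal omits; that is minor compared with the two gaps above.
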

	In other words, Theorem \ref{main3} asserts the $n$th largest singular value of $[M_{b_1},[M_{b_2},\mathrm{X}_{k_1}R_{k_2}]]$ is $O((n+1)^{-\frac{1}{\mathbb{Q}}}).$

	Theorem \ref{main3} is partly inspired by a result of Gimperlein and Goffeng \cite[Theorem 1.3]{GG}, which asserts (in particular) that the commutator of a $0$-order Heisenberg pseudodifferential operator on a compact Heisenberg manifold with a Lipschitz function belongs to a certain weak Schatten class. Our result is different insofar as the underlying manifold is not compact and so we need some integrability properties on the functions $b_1$ and $b_2.$
	
	This paper is organised as follows. In Section \ref{Section P}, we provides preliminary
	background information and notation on
	stratified nilpotent Lie groups. In Section \ref{Section T1}, the proof of Theorem \ref{Thom1} is given. In Section
	\ref{Section co}, we give the proof of Corollary \ref{cor2}. In the last section  we prove Theorem \ref{main3}.
	
	\section{Preliminaries}\label{Section P}
	
	\subsection{The stratified nilpotent Lie groups $\mathcal{G}$}
	
	Let $\mathfrak{g}$ be the left-invariant Lie algebra (assumed real and of finite dimension) of the connected, simply connected nilpotent Lie group $\mathcal{G}$. A Lie group $\mathcal{G}$ of is called stratified of step $k$ if $\mathfrak{g}$  satisfies a direct sum decomposition
	$$
	\mathfrak{g}=\bigoplus_{i=1}^{k} V_{i}, \quad\left[V_{1}, V_{i}\right]=V_{i+1}, \text { for } i \leq k-1 \text { and }\left[V_{1}, V_{k}\right]=0.
	$$ 
	The natural dilations on $\mathfrak{g}$ are given by
	$$
	\delta_{r}\bigg(\sum_{i=1}^{k} v_{i}\bigg)=\sum_{i=1}^{k} r^{i} v_{i}, \quad \text { with } v_{i} \in V_{i}, ~ r>0.
	$$ The dilations on $\mathfrak{g}$ allow the definition of dilation on $\mathcal{G}$, which we still denote by $\delta_{r}$.
	
	One identifies $\mathfrak{g}$ and $\mathcal{G}$ via the exponential map
	$$
	\exp : \mathfrak{g} \longrightarrow \mathcal{G}
	$$
	which is a diffeomorphism \cite[Proposition 1.2a]{FS1982}. We write the group operation on $\mathcal{G}$ as $\circ,$ and the identity element as $o.$
	
	We fix once and for all a (bi-invariant) Haar measure $d g$ on $\mathcal{G}$ (which is just the lift of Lebesgue measure on $\mathfrak{g}$ via exp) \cite[Proposition 1.2c]{FS1982}.

	We choose once and for all a basis $\left\{\mathrm{X}_{1}, \cdots, \mathrm{X}_{n}\right\}$ of $V_{1}$ and consider the sub-Laplacian $\Delta=\sum_{j=1}^{n} \mathrm{X}_{j}^{2}$. Observe that $\mathrm{X}_{j}(1 \leq j \leq n)$ is homogeneous of degree 1 with respect to the dilations, and $\Delta$ has degree 2 in the sense that
	$$
	\begin{aligned}
		&\mathrm{X}_{j}\left(f \circ \delta_{r}\right)=r\left(\mathrm{X}_{j} f\right) \circ \delta_{r}, \quad 1 \leq j \leq n, r>0, f \in C^{1} \\
		&\delta_{\frac{1}{r}} \circ \Delta \circ \delta_{r}=r^{2} \Delta, \quad \forall r>0.
	\end{aligned}
	$$
	Because of the stratification, the dilation $\delta_{\lambda}$ defines an automorphism of $\mathcal{G}.$ A second important family of diffeomorphisms of $\mathcal{G}$ are the translations. For any $g \in \mathcal{G}$, the (left) translation $\tau_{g}: \mathcal{G} \rightarrow \mathcal{G}$ is defined as
	$$
	\tau_{g}\left(g^{\prime}\right)=g \circ g^{\prime}.
	$$
	For any set $E \subset \mathcal{G}$, denote by $\tau_{g}(E)=\left\{g \circ g^{\prime}: g^{\prime} \in E\right\}$ and $\delta_{r}(E)=\left\{\delta_{r}(g): g \in E\right\}$.
	
	For $i=1, \cdots, k$, let $n_{i}=\operatorname{dim} V_{i},$ $m_{i}=n_{1}+\cdots+n_{i}$ and $m_{0}=0$, clearly, $n_{1}=n$. Set $N=m_{k}$. T
	Identifying $V_i$ with $\mathbb{R}^{n_i}$ and $\mathcal{G}$ with $\mathbb{R}^N,$ we have
	$$
	\delta_{\lambda}(g)=\delta_{\lambda}\left(g_{1}, g_{2}, \cdots, g_{N}\right)=\left(\lambda^{\alpha_{1}} g_{1}, \lambda^{\alpha_{2}} g_{2}, \cdots, \lambda^{\alpha_{N}} g_{N}\right),\quad g_1,\ldots,g_N \in \mathbb{R}.
	$$
	where $\alpha_{j}=i$ whenever $m_{i-1}<j \leq m_{i}, i=1, \cdots, k$. Therefore, $1=\alpha_{1}=\cdots=\alpha_{n_{1}}<$ $\alpha_{n_{1}+1}=2 \leq \cdots \leq \alpha_{N}=k$.

	Let $\mathbb Q$ denote the homogeneous dimension of $\mathcal{G}$, namely,
	$$
	\mathbb Q=\sum_{i=1}^{N}\alpha_{i}=\sum_{i=1}^{k} i \operatorname{dim} V_{i}.$$

In this paper, $\rho(g,g')$ denotes the Carnot--Caratheodory distance between $g$ and $g'.$ For brevity, we denote $\rho(g,1_{\mathcal{G}})$ by $\rho(g).$

We define the ball centered at $g$ with radius $r$ by $B(g, r) = \{g' \in \mathcal{G} : \rho(g,g') < r\}$, and by  $\lambda B$  denote the ball $ B(g,\lambda r)$ with $\lambda>0$. Let $|B(g,r)|$ be the Haar measure of the ball $ 	B(g, r)
.$ Since the Haar measure is precisely the Lebesgue measure, it follows that 	$|B(g,r)|=r^{\mathbb{Q}}$ for every $g \in\mathcal{G}$ and for every $r>0.$

	\subsection{Lipschitz spaces ${\rm Lip}(\mathcal G)$ on $\mathcal{G}$}
We recall that $b\in {\rm Lip}(\mathcal G)$ if $\nabla b\in L^\infty (\mathcal G)$, and 
$\|b\|_{ {\rm Lip}(\mathcal G)}:=\| \nabla b\|_{ L^\infty (\mathcal G)}. $

\begin{theorem}\label{lipschitz theorem} If $\nabla b\in L_{\infty}(\mathcal{G}),$ then
$$|b(g_1)-b(g_2)|\leq c_{\mathcal{G}}\rho(g_1,g_2).$$ 
\end{theorem}
\begin{proof}
It is direct to see that via mean value theorem (\cite[Theorem 1.41]{FS1982}), 
$$ |b(g_1)-b(g_2)|\leq C \| \nabla b\|_{ L^\infty (\mathcal G)}\rho(g_1,g_2).$$
We denote the smallest constant $C$ by $C_0$ and set $c_{\mathcal G}:=C \| \nabla b\|_{ L^\infty (\mathcal G)}$.
Hence, 
$$ \sup_{ g_1\not=g_2 } { |b(g_1)-b(g_2)|\over \rho(g_1,g_2) }\leq c_{\mathcal G}.$$
The proof is complete.
\end{proof}
	
	Mac\'{i}as and Segovia \cite{MS1979} prove the following useful characterization  on general spaces of homogeneous type in the sense of Coifman and Weiss, which can be applied to our setting $\mathcal G$.
	\begin{lemma}[\cite{MS1979}]\label{eq-Rlem1} 
	Let $b$ be a locally integrable function on $\mathcal G$. Then the following two conditions are equivalent.
	
	{\rm(1)}. there exists $c_{\mathcal G}>0$ such that 
	$$ \sup_{ g_1\not=g_2 } { |b(g_1)-b(g_2)|\over \rho(g_1,g_2) }\leq c_{\mathcal G}.$$

{\rm(2)}. there exists $C_{\mathcal G}>0$ such that 
		$$\sup_B\frac{1}{|B|^{1+1 / \mathbb Q}} \int_{B}|b(g)-m_Bb| d g \leq C_{\mathcal G},$$
			where $m_Bb=\frac{1}{|B|} \int_Bb(g)dg,$ and $\sup_{B}$ denotes the supremum over all balls (with respect to Carnot--Caratheodory distance) in $\mathcal{G}.$ 
	\end{lemma}
	
\section{The proof of Theorem \ref{Thom1}}\label{Section T1}
\subsection{The upper bound on the norm of the commutator $[M_b, \nabla R_{j}]$}
	To prove the upper bound of $L^p\to L^p$ norm of the commutator $[M_b, \nabla R_{j}]$, we need to use the following results from the Calder\'on--Zygmund theory on $\mathcal{G}.$
	
	The following is a standard kernel definition, as in, e.g., \cite[Definition 1.8]{DH2009}.
	
	\noindent
\begin{defi}\label{def CZO} 
We say that a measurable function $K: \mathcal{G} \times \mathcal{G} \backslash\{g=g'\} \rightarrow \mathbb{C}$ is a Calder\'on--Zygmund kernel (or simply a \emph{standard kernel}) if 
		\begin{align*}
			&|K(g, g')| \leq c_K\frac{1}{\rho(g,g')^{\mathbb Q}},\\
			&|K(g, g')-K(g, g'')| \leq c_K \frac{\rho(g',g'')}{\rho(g,g')^{\mathbb Q+1}} \quad \text{if} \quad \rho(g,g')>2\rho(g',g''),
			\\
			&|K(g, g')-K(\tilde{g}, g')| \leq c_K \frac{\rho(g,\tilde{g})}{\rho(g,g')^{\mathbb Q+1}} \quad \text{if} \quad \rho(g,g')>2\rho(g,\tilde{g}).
		\end{align*}		
A linear operator $T:L^1_{\mathrm{comp}}(\mathcal{G})\to L^1_{\mathrm{loc}}(\mathcal{G})$ is said to be a Calder\'on--Zygmund singular integral operator if there exists a standard kernel $K$ such that for all compactly supported $L^1$ functions $f$ we have
		$$
		T f(g)=\int_{\mathcal{G}} K(g, g') f(g') d g', \quad g \notin \operatorname{supp}(f).
		$$
	\end{defi}
	
	The following $T1$ theorem for $\mathcal{G}$ is a special case of the corresponding theorem for spaces of homogeneous type, see \cite[Theorem 1.18]{DH2009}.
	\begin{theorem}\label{TTstar}
		Let $T$ be a Calder\'{o}n--Zygmund singular integral operator. Then a necessary and sufficient condition for the extension of $T$ as a continuous linear operator on $L^{2}(\mathcal{G})$  is that the following properties are all satisfied
		\begin{enumerate}[{\rm (1)}]
			
			\item $T 1 \in \operatorname{BMO}(\mathcal G)$,
			
			\item $T^{*} 1 \in \operatorname{BMO}(\mathcal G)$,
			
			\item $T$ satisfies the Weak Boundedness Property (WBP), i.e. 
			$$
			|\langle T \varphi, \psi\rangle| \leq c_T R^{\mathbb Q}\{\|\varphi\|_{L^\infty}+R\| \varphi\|_{{\rm Lip}(\mathcal{G})}\}\{\|\psi\|_{L^\infty}+R\| \psi\|_{{\rm Lip}(\mathcal{G})}\}
			$$
			for all $R>0$ and $g \in \mathcal{G}$, and all Lipschitz functions $\varphi, \psi$ supported in the ball
			$$
			B(g, R):=\{g' \in \mathcal{G}:\rho(g,g')<R\}.
			$$
		\end{enumerate}
	\end{theorem}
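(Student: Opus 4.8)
The starting point is that $(\mathcal{G},\rho,dg)$ is a space of homogeneous type: $\rho$ is a quasi-distance by \eqref{eq-R2.5} and the Haar measure is Ahlfors--David regular of dimension $\mathbb Q$, since $|B(g,r)|=c_1 r^{\mathbb Q}$. The asserted characterisation is therefore a special case of the $T1$ theorem on spaces of homogeneous type, so the plan is not to reprove it from scratch but to recall why the standard argument applies and what one has to check in our setting.

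The necessity of (1)--(3) is the easy half. If $T$ extends boundedly to $L^2(\mathcal{G})$ then (3) is immediate, as $\|\varphi\|_{L^2}\lesssim R^{\mathbb Q/2}\|\varphi\|_{L^\infty}$ whenever $\operatorname{supp}\varphi\subset B(g,R)$. For (1), I would fix $B=B(g_0,R)$ and write $1=\varphi+(1-\varphi)$ with $\varphi$ a bounded cutoff that equals $1$ on a fixed large multiple of $B$ and is supported in a slightly larger one. Then $\frac{1}{|B|}\int_B|T\varphi|\lesssim|B|^{-1/2}\|T\varphi\|_{L^2}\lesssim 1$, while for $g\in B$ the difference $T(1-\varphi)(g)-T(1-\varphi)(g_0)$ is controlled, via the first-variable smoothness bound $|K(g,g')-K(g_0,g')|\lesssim\rho(g,g_0)^\delta\rho(g,g')^{-\mathbb Q-\delta}$, by integrating in $g'$ over the region where $\rho(g_0,g')$ is large; this gives $\frac{1}{|B|}\int_B|T1-c_B|\lesssim1$ with $c_B=T(1-\varphi)(g_0)$. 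Applying the same reasoning to the transposed kernel yields (2).

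Sufficiency is the real content and follows the David--Journ\'e strategy adapted to $\mathcal{G}$. The steps I would carry out are: (i) construct an approximation to the identity $\{S_k\}_{k\in\mathbb{Z}}$ on $\mathcal{G}$ — using the Christ/Coifman--Weiss dyadic cubes $\mathscr{D}$ from Section \ref{Section P} together with Mac\'{i}as--Segovia regularisation \cite{MS1979} — and put $D_k=S_k-S_{k-1}$, so that $I=\sum_k D_k$ with each $D_k$ localised at scale $A^k$ and satisfying $D_k 1=D_k^*1=0$; (ii) for $\beta\in\operatorname{BMO}(\mathcal{G})$ build a paraproduct $\Pi_\beta\approx\sum_k (D_k\beta)\,S_{k-1}$, and show by a Carleson-measure estimate that $\Pi_\beta$ is a Calder\'on--Zygmund operator bounded on $L^2$, with $\Pi_\beta 1=\beta$ and $\Pi_\beta^*1=0$; (iii) replace $T$ by $\widetilde T=T-\Pi_{T1}-(\Pi_{T^*1})^*$, still a Calder\'on--Zygmund operator with the weak boundedness property and now with $\widetilde T 1=\widetilde T^*1=0$; (iv) establish the almost-orthogonality estimate $\|D_j\widetilde T D_k\|_{L^2\to L^2}\lesssim A^{\varepsilon|j-k|}$ for some $\varepsilon>0$, using cancellation of $D_j$ and $D_k$ against the kernel bounds when $|j-k|$ is large and WBP together with $\widetilde T1=\widetilde T^*1=0$ when $j\approx k$; and (v) conclude via the Cotlar--Stein lemma that $\widetilde T$, and hence $T$, is bounded on $L^2(\mathcal{G})$.

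The hard part is steps (ii)--(iv): on $\mathcal{G}$ there is no Fourier transform, so the Littlewood--Paley family $\{D_k\}$, the paraproducts, and the off-diagonal decay all have to be produced directly from the dyadic structure and the quasi-metric geometry, with explicit control of the constants $C_\rho,C'_\rho$ in \eqref{eq-R2.5}. This is exactly what is carried out in \cite[Theorem 1.18]{DH2009}; for our purposes it therefore suffices to observe that $\mathcal{G}$ satisfies the hypotheses of that theorem — which is immediate from the properties of $\rho$ and of the Haar measure recorded above — and to invoke it.
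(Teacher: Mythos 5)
Your proposal is correct and ultimately rests on the same foundation as the paper, which simply observes that $(\mathcal{G},\rho,dg)$ is a space of homogeneous type and quotes the statement as a special case of the $T1$ theorem in \cite[Theorem 1.18]{DH2009}. Your additional sketch of the necessity direction and of the David--Journ\'e paraproduct/Cotlar--Stein machinery is consistent with the standard proof of that cited result, so there is no substantive difference in approach.
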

	
	The classical Calder\'on--Zygmund theorem allows us to deduce $L^p$ boundedness from $L^2$ boundedness. This follows from standard methods, see e.g. \cite[Theorem 2.1, Chapter III]{CoifmanWeiss1971}.
	\begin{lemma}\label{Rlemma2.2}
		Let $T$ be a Calder\'on--Zygmund singular integral operator which is a bounded operator on $L^{2}(\mathcal{G}).$
		Then $T$ is bounded on $L^p(\mathcal G), 1<p<\infty$.
	\end{lemma}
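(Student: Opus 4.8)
Let $T$ be a Calderón–Zygmund singular integral operator on $\mathcal{G}$ (in the sense of the Definition above, with a standard kernel $K$) which extends to a bounded operator on $L^2(\mathcal{G})$. Then $T$ extends to a bounded operator on $L^p(\mathcal{G})$ for every $1 < p < \infty$.

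\begin{proof}
The plan is to run the standard Calderón--Zygmund machinery on the space of homogeneous type $(\mathcal{G},\rho,dg)$; since $\mathcal{G}$ equipped with the quasi-distance $\rho$ and Haar (= Lebesgue) measure is a space of homogeneous type with the doubling property $|B(g,2r)| = 2^{\mathbb Q}|B(g,r)|$, the abstract theorem of Coifman--Weiss \cite[Chapter III, Theorem 2.1]{CoifmanWeiss1971} applies verbatim; what follows is a sketch of how that argument specializes here.

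First I would establish the weak-type $(1,1)$ bound. Fix $f\in L^1(\mathcal{G})\cap L^2(\mathcal{G})$ and $\lambda>0$, and perform a Calderón--Zygmund decomposition of $f$ at height $\lambda$: using the dyadic cube structure of Section 2.4 (or the Vitali-type covering lemma available in any space of homogeneous type), one writes $f = g + \sum_i b_i$, where $\|g\|_\infty \lesssim \lambda$, $\|g\|_1 \le \|f\|_1$, each $b_i$ is supported in a cube (or ball) $Q_i$ with $\int_{Q_i} b_i\,dg = 0$, $\|b_i\|_1 \lesssim \lambda |Q_i|$, the $Q_i$ have bounded overlap, and $\sum_i |Q_i| \lesssim \lambda^{-1}\|f\|_1$. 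For the good part one uses the $L^2$ boundedness of $T$: $|\{|Tg|>\lambda/2\}| \le (2/\lambda)^2\|Tg\|_2^2 \lesssim \lambda^{-2}\|g\|_2^2 \lesssim \lambda^{-2}\cdot\lambda\|f\|_1 = \lambda^{-1}\|f\|_1$. For the bad part, let $Q_i^* = B(g_{Q_i}, C\cdot\text{radius})$ be a fixed dilate of $Q_i$; then $|\bigcup_i Q_i^*| \lesssim \sum_i|Q_i| \lesssim \lambda^{-1}\|f\|_1$, so it suffices to bound $|\{g\notin\bigcup_i Q_i^* : |Tb_i(g)|\text{ summed }>\lambda/2\}|$. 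Here the cancellation $\int b_i = 0$ lets one write, for $g\notin Q_i^*$,
\[
Tb_i(g) = \int_{Q_i} \big(K(g,g') - K(g, c_i)\big) b_i(g')\,dg',
\]
where $c_i$ is the center of $Q_i$, and the Hörmander-type regularity estimate $|K(g,g')-K(g,c_i)| \lesssim \rho(g',c_i)^\delta \rho(g,c_i)^{-\mathbb Q - \delta}$ (valid once $\rho(g,g') > 2\kappa\rho(g',c_i)$, which holds off $Q_i^*$ for $C$ large) gives $\int_{\mathcal{G}\setminus Q_i^*}|Tb_i(g)|\,dg \lesssim \|b_i\|_1 \lesssim \lambda|Q_i|$. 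Summing over $i$ and invoking Chebyshev yields the complement contribution $\lesssim \lambda^{-1}\|f\|_1$. Combining the two pieces gives $|\{|Tf|>\lambda\}| \lesssim \lambda^{-1}\|f\|_1$, i.e. $T$ is weak-type $(1,1)$ on a dense subspace.

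With weak-$(1,1)$ and strong-$(2,2)$ in hand, the Marcinkiewicz interpolation theorem gives strong-type $(p,p)$ for $1<p<2$. For $2<p<\infty$ I would pass to the adjoint: the transpose kernel $K^\top(g,g') := K(g',g)$ again satisfies the three standard-kernel estimates (the roles of the two regularity conditions are simply interchanged), so $T^*$ is itself a Calderón--Zygmund operator bounded on $L^2$, hence bounded on $L^q$ for $1<q<2$ by the case already proved; duality then gives $T$ bounded on $L^p$ for the conjugate exponents $2<p<\infty$. The case $p=2$ is the hypothesis. This covers all $1<p<\infty$.

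The only genuine point requiring care is the weak-$(1,1)$ estimate for the bad part, and within it the verification that the geometric hypothesis $\rho(g,g') > 2\kappa\,\rho(g',c_i)$ of the kernel regularity bound is met for $g$ outside a suitably large fixed dilate $Q_i^*$ of $Q_i$ --- this is where one must use the quasi-triangle inequality \eqref{eq-R2.5} with its constant $C_\rho$, choosing the dilation factor $C$ in the definition of $Q_i^*$ large enough (depending only on $C_\rho$, $\kappa$) that $\rho(g, c_i) \ge 2 C_\rho\,\mathrm{rad}(Q_i)$ forces both $\rho(g,g')\approx\rho(g,c_i)$ and the required comparison; the bounded-overlap property of the $\{Q_i^*\}$, standard in spaces of homogeneous type, then keeps $\sum_i|Q_i^*| \lesssim \lambda^{-1}\|f\|_1$. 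Everything else --- the Calderón--Zygmund decomposition, Marcinkiewicz interpolation, and the duality step --- is routine in the homogeneous-type setting and we omit the details, referring to \cite[Chapter III]{CoifmanWeiss1971}.
\end{proof}
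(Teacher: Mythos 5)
Your proposal is correct and follows exactly the route the paper intends: the paper proves Lemma \ref{Rlemma2.2} simply by invoking the standard Calder\'on--Zygmund theory on spaces of homogeneous type from \cite[Chapter III, Theorem 2.1]{CoifmanWeiss1971}, and your sketch (Calder\'on--Zygmund decomposition and weak $(1,1)$ via the H\"ormander condition, Marcinkiewicz interpolation for $1<p<2$, duality via the transposed kernel for $2<p<\infty$) is precisely the argument behind that citation.
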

	The combination of Theorem \ref{TTstar} and Lemma \ref{Rlemma2.2} shows that if $T$ is a Calder\'on--Zygmund singular integral operator, $T1, T^*1 \in \mathrm{BMO}(\mathcal{G})$ and $T$ has the weak boundedness property, then $T$ is bounded on $L^p(\mathcal{G})$ for all $1<p<\infty.$ This forms the basis for our proof of Theorem \ref{Thom1}. 

\begin{lemma}\label{first cz verification lemma}
If $T$ is a convolution operator with the convolution kernel $K$ satisfying 
\begin{align}
&|K(g)| \leq c_K \frac{1}{\rho(g)^{\mathbb Q+1}},\label{eeee1}\\
&|K(g')-K(g'')| \leq c_K\frac{\rho(g',g'')}{\rho(g')^{\mathbb Q+2}} \quad \text{if} \quad \rho(g')>2\rho(g',g''),\label{eeee2},
\end{align}
then the kernel of $[M_b,T]$ is a standard kernel as in Definition \ref{def CZO}.
\end{lemma}
\begin{proof}
Denote by $\mathcal{K}$ the integral kernel of $[M_b,T].$ Note that
\begin{align*}
\mathcal{K}(g,g')=(b(g)-b(g'))K({g'}^{-1}g). 
\end{align*}
Appealing to the definition of $\mathcal{K},$ assumption \eqref{eeee1} and Theorem \ref{lipschitz theorem}, we obtain
$$|\mathcal{K}(g,g')|\leq |b(g)-b(g')|\cdot |K({g'}^{-1}g)|\leq c_{\mathcal{G}}\|b\|_{\rm Lip(\mathcal{G})}\rho(g,g')\cdot  c_K\frac{1}{\rho(g,g')^{\mathbb{Q}+1}}.$$ 
This verifies the first condition in Definition \ref{def CZO}.

Appealing to the definition of $\mathcal{K}$ and Theorem \ref{lipschitz theorem}, we obtain
\begin{align*}
| \mathcal{K}(g,g') -\mathcal{K}(g,g'') |&=|(b(g)-b(g'))K({g'}^{-1}g)-(b(g)-b(g''))K({g''}^{-1}g)|\nonumber\\
&\leq |b(g'')-b(g')|\cdot |K({g''}^{-1}g)|+|b(g)-b(g')|\cdot|K({g'}^{-1}g)-K({g''}^{-1}g)|\\
&\leq c_{\mathcal{G}}\|b\|_{{\rm Lip}(\mathcal{G})}\Big(\rho(g',g'')\cdot |K({g''}^{-1}g)|+\rho(g,g')\cdot |K({g'}^{-1}g)-K({g''}^{-1}g)|\Big).\nonumber
\end{align*}
Suppose now $\rho(g,g')>2\rho(g',g'').$ It follows that $\rho({g'}^{-1}g)>2\rho({g'}^{-1}g,{g''}^{-1}g).$ Appealing to \eqref{eeee1} and \eqref{eeee2}, we estimate
\begin{align*}
	| \mathcal K(g,g') -\mathcal K(g,g'') |
&\leq c_{\mathcal{G}}c_K\|b\|_{{\rm Lip}(\mathcal{G})}\Big(\rho(g',g'')\cdot\frac1{\rho(g,g'')^{\mathbb{Q}+1}}+\rho(g,g')\cdot \frac{\rho(g',g'')}{\rho(g,g')^{\mathbb{Q}+2}}\Big)\nonumber\\
&\leq c_{\mathcal{G}}c_K\|b\|_{{\rm Lip}(\mathcal{G})}\Big(\rho(g',g'')\cdot\frac{2^{\mathbb{Q}+1}}{\rho(g,g')^{\mathbb{Q}+1}}+\rho(g,g')\cdot \frac{\rho(g',g'')}{\rho(g,g')^{\mathbb{Q}+2}}\Big).
\end{align*}
This verifies the second condition in Definition \ref{def CZO}. Third condition can be verified similarly.
\end{proof}

\begin{lemma}\label{first czo lemma}	
The convolution kernel $K^i_j$ of the operator $X_iR_j$ satisfies the conditions \eqref{eeee1}--\eqref{eeee2}. 
\end{lemma}	
\begin{proof} Let $p_t,$ $t>0,$ be the convolution kernel of $e^{t\Delta}$ on $\mathcal{G}.$. For convenience, we denote $p(g)=p_1(g).$ It is easy to see by a rescaling that
\begin{align}\label{pt}
p_{t}(g)=t^{-\frac{\mathbb Q}{2}} p(\delta_{\frac{1}{\sqrt{t}}}(g)), \quad \forall t>0,\quad g \in \mathcal{G}.
\end{align}
The convolution kernel of $X_iX_j(-\Delta)^{-\frac{1}{2}}$ is written as
$$K^i_j(g)=\frac{1}{\sqrt{\pi}} \int_0^{+\infty} t^{-\frac{1}{2}} (X_iX_jp_t)(g)d t=\frac{1}{\sqrt{\pi}} \int_{0}^{+\infty} t^{-\frac{\mathbb{Q}}{2}-\frac{3}{2}} (X_iX_jp)(\delta_{\frac{1}{\sqrt{t}}}(g))dt.$$
	
Classical estimates for the heat kernel and its derivatives on stratified groups (see Theorem 4.2 in \cite{VSC1992}) imply that
\begin{align}\label{kernel}
K^i_j(g)=O(\rho(g)^{-\mathbb{Q}-1}),\quad (X_mK^i_j)(g)=O(\rho(g)^{-\mathbb{Q}-2}),\quad g\in\mathcal{G}.
\end{align}
This immediately yields the assertion.
\end{proof}
	
	We further show that $T 1 \in \operatorname{BMO}(\mathcal G)$,  $T^{*} 1 \in \operatorname{BMO}(\mathcal G)$, and $T$ satisfies the weak boundedness property.

\begin{lemma}\label{lem X_i R_j( 1 ) =0}
For $i,j =1,2,\ldots,n$, we have $\mathrm{X}_i R_j( 1 ) =0$.
\end{lemma}
\begin{proof}
Recall that $R_j = \mathrm{X}_j \Delta^{-{1\over2}}$ is a standard Calder\'on--Zygmund operator on 
$\mathcal G$. Hence,  $R_j$ is bounded from $L^\infty(\mathcal G)$ to $\operatorname{BMO}(\mathcal G)$ (see for example the argument in \cite[Section 6]{FS1982}).

Thus, $R_j( 1 )$ is a function in $\operatorname{BMO}(\mathcal G)$ with 
$\|R_j( 1 )\|_{\operatorname{BMO}(\mathcal G)}\lesssim1$. Moreover, since
\begin{align*}
R_j( 1 ) = R_j(\delta_r \circ1 ) =\delta_r \circ R_j(1 ) 
\end{align*}
for all $r>0$, we obtain that 
$R_j( 1 )  $ must be a constant.

As a consequence,  $\mathrm{X}_i R_j( 1 ) =0$.

The proof is complete.
\end{proof}

\begin{lemma} \label{lem ijk}
For $i,j,k =1,2,\ldots,n$, 
$\mathrm{X}_i\mathrm{X}_j \Delta^{-1} R_k^*$ is bounded from $L^\infty(\mathcal G) $ to $\operatorname{BMO}(\mathcal G)$.
\end{lemma}
\begin{proof}
Note that $$ \mathrm{X}_i\mathrm{X}_j \Delta^{-1} R_k^* = \mathrm{X}_i \mathrm{X}_j \Delta^{-{3\over2}} X_k, $$
which is a standard Calder\'on--Zygmund operator on $\mathcal G$. Hence, it maps 
$L^\infty(\mathcal G) $ to $\operatorname{BMO}(\mathcal G)$ (see for example the argument in \cite[Section 6]{FS1982}).
\end{proof}

\begin{lemma}\label{lem BMO argument}
 For $i,j =1,2,\ldots,n$,  and for every $b\in {\rm Lip}(\mathcal{G})$, $\mathrm{X}_iR_jb\in \operatorname{BMO}(\mathcal G)$.
\end{lemma}
\begin{proof}
We note that 
\begin{align*}
\mathrm{X}_iR_jb &=  \mathrm{X}_i\mathrm{X}_j \Delta^{-1} \Delta^{{1\over2}} b 
=\mathrm{X}_i\mathrm{X}_j \Delta^{-1} \bigg(\sum_{k=1}^n R_k^* X_k b\bigg)
=  \sum_{k=1}^n \bigg( \mathrm{X}_i\mathrm{X}_j \Delta^{-1}R_k^*\bigg)( X_k b). 
\end{align*}

Note that $X_kb\in L^\infty (\mathcal G)$ for all $k=1,\ldots,n$, and that $\mathrm{X}_i\mathrm{X}_j \Delta^{-1} R_k^*$ is bounded from $L^\infty(\mathcal G)$ to $\operatorname{BMO}(\mathcal G)$ (by Lemma \ref{lem ijk}). We obtain that $\mathrm{X}_iR_jb\in \operatorname{BMO}(\mathcal G)$ with 
$$ \|\mathrm{X}_iR_jb\|_{ \operatorname{BMO}(\mathcal G)}\lesssim \sum_{j=1}^n \|\mathrm{X}_jb\|_{ L^\infty (\mathcal G) }.$$

The proof is complete. 
\end{proof}

\begin{lemma}\label{lem T1 T*1}
If $T=[M_b, \mathrm{X}_i R_{j}],$ then $T(1),T^{\ast}(1) \in \operatorname{BMO}(\mathcal G)$.
\end{lemma}
\begin{proof} Without loss of generality, $b$ is real-valued.

For $T(1)$, note that
$$T(1)= [M_b, \mathrm{X}_iR_j ](1)= b\cdot \mathrm{X}_iR_j(1)  -(\mathrm{X}_jR_i)(b)= -(\mathrm{X}_jR_i)(b),$$
where the last equality follows from Lemma \ref{lem X_i R_j( 1 ) =0}.
Moreover, from Lemma \ref{lem BMO argument} we see that $\mathrm{X}_jR_i(b) \in \operatorname{BMO}(\mathcal G)$.
Hence, we obtain that $T(1)\in \operatorname{BMO}(\mathcal G)$.

For $T^{\ast}(1)$, note that
$$	T^*(1)  =- [ b, (\mathrm{X}_jR_i)^* ](1)  
	= -b\cdot (\mathrm{X}_jR_i)^*(1)  +(\mathrm{X}_jR_i)^*(b)    
         = -b\cdot R_i^* \mathrm{X}_j(1)   + R_i^* \mathrm{X}_j (b)=R_i^{\ast}(\mathrm{X}_jb),   
$$  
where the last equality follows from $\mathrm{X}_j(1)=0.$
Note that  $\mathrm{X}_j(b) \in L^\infty(\mathcal G)$.  Again,  $R_i^*$  maps $L^\infty(\mathcal G)$ to $\operatorname{BMO}(\mathcal G)$ (see for example the argument in \cite[Section 6]{FS1982}).
Hence, 
	\begin{align*}
	\|T^*(1)\|_{\operatorname{BMO}(\mathcal G)}  \leq \|R_i^* \mathrm{X}_j (b)  \|_{\operatorname{BMO}(\mathcal G)}\leq C    \|\mathrm{X}_j(b) \|_{ L^\infty(\mathcal G)}.
         \end{align*}
Hence,
$T^*(1)  \in \operatorname{BMO}(\mathcal G)$.

The proof is complete.
\end{proof}

\begin{lemma}\label{pre-wbp first lemma} If $b\in {\rm Lip}(\mathcal{G}),$ then
$$\|M_{\chi_{B(g,R)}}[M_b,R_j]M_{\chi_{B(g,R)}}\|_{L^2(\mathcal{G})\to L^2(\mathcal{G})}=O(R).$$
\end{lemma}
\begin{proof} Integral operator with an integral kernel
$$(g_1,g_2)\to \rho(g_1,g_2)^{1-\mathbb{Q}}\cdot\chi_{B(1_{\mathcal{G}},1)}(g_1)\cdot\chi_{B(1_{\mathcal{G}},1)}(g_2)$$
is bounded. By dilation and translation invariance, integral operator with an integral kernel
$$(g_1,g_2)\to \rho(g_1,g_2)^{1-\mathbb{Q}}\cdot\chi_{B(g,R)}(g_1)\cdot\chi_{B(g,R)}(g_2)$$
is also bounded and its norm is proportional to $R.$

If $T_1$ and $T_2$ are integral operators with integral kernels $K_1$ and $K_2$ such that $|K_2|\leq K_1,$ then
$$\|T_2\|_{L^2(\mathcal{G})\to L^2(\mathcal{G})}\leq\|T_1\|_{L^2(\mathcal{G})\to L^2(\mathcal{G})}.$$

It remains to note that the integral kernel of $M_{\chi_{B(g,R)}}[M_b,R_j]M_{\chi_{B(g,R)}}$ is given by the formula
$$(g_1,g_2)\to (b(g_1)-b(g_2))\cdot K^j(g_2^{-1}g_1)\cdot \chi_{B(g,R)}(g_1)\cdot\chi_{B(g,R)}(g_2)=O\Big(\rho(g_1,g_2)^{1-\mathbb{Q}}\cdot \chi_{B(g,R)}(g_1)\cdot\chi_{B(g,R)}(g_2)\Big).$$
The proof is complete.
\end{proof}

\begin{lemma}\label{lem for wbp}
The operator $T=[M_b, \mathrm{X}_i R_{j}]$ satisfies the Weak Boundedness Property.
\end{lemma}

\begin{proof} 
	
Choose an arbitrary $g\in\mathcal G$ and $R>0$. Let $\varphi, \psi$ be  Lipschitz functions supported in the ball
	$$
	B({g}, R):=\{g'\in \mathcal{G}:\rho({g},g')<R\}.
	$$

Clearly,
$$T=-M_{\mathrm{X}_ib} R_j+\mathrm{X}_i[M_b,R_j].$$
Thus,
$$\langle T\phi,\psi\rangle=-\langle R_j(\phi),\mathrm{X}_i(b)\cdot\psi\rangle-\langle [M_b,R_j]\phi,\mathrm{X}_i\psi\rangle=-\langle R_j(\phi),\mathrm{X}_i(b)\cdot\psi\rangle-\langle M_{\chi_{B(g,R)}}[M_b,R_j]M_{\chi_{B(g,R)}}\phi,\mathrm{X}_i\psi\rangle.$$
and
\begin{align*}
|\langle T\phi,\psi\rangle|&\leq |\langle R_j(\phi),\mathrm{X}_i(b)\cdot\psi\rangle|+|\langle M_{\chi_{B(g,R)}}[M_b,R_j]M_{\chi_{B(g,R)}}\phi,\mathrm{X}_i\psi\rangle|\\
&\leq\|R_j\|_{L^2(\mathcal{G})\to L^2(\mathcal{G})}\|\phi\|_{L^2(\mathcal{G})}\|X_i(b)\cdot\psi\|_{L^2(\mathcal{G})}+\|M_{\chi_{B(g,R)}}[M_b,R_j]M_{\chi_{B(g,R)}}\|_{L^2(\mathcal{G})\to L^2(\mathcal{G})}\|\phi\|_{L^2(\mathcal{G})}\|X_i\psi\|_{L^2(\mathcal{G})} \\
&\leq R^{\mathbb{Q}}\Big(\|\phi\|_{L^{\infty}(\mathcal{G})}\|X_i(b)\|_{L^{\infty}(\mathcal{G})}\|\psi\|_{L^{\infty}(\mathcal{G})}+\|M_{\chi_{B(g,R)}}[M_b,R_j]M_{\chi_{B(g,R)}}\|_{L^2(\mathcal{G})\to L^2(\mathcal{G})}\|\phi\|_{L^{\infty}(\mathcal{G})}\|X_i\psi\|_{L^{\infty}(\mathcal{G})}\Big).
\end{align*}
The assertion follows now from Lemma \ref{pre-wbp first lemma}.
\end{proof}

\begin{proof}[Proof of the upper bound in Theorem \ref{Thom1}] Lemmas \ref{first czo lemma}, \ref{lem T1 T*1} and \ref{lem for wbp} assert that the assumptions in Theorem \ref{TTstar} are met for the operator $T=[M_b,\nabla R_j].$ By Theorem \ref{TTstar}, $T$ is bounded on $L^2(\mathcal{G}).$ Using Lemma \ref{Rlemma2.2}, $T$ is bounded on $L^p(\mathcal{G})$ for every $1<p<\infty.$ Using Leibniz rule, we infer the boundedness of the operator $\mathrm{X}_i[M_b,R_j]$ on $L^p(\mathcal{G})$ for every $1<p<\infty.$

What remains is to prove that the norm of $\mathrm{X}_i[M_b,R_j]$ is bounded by $\|b\|_{{\rm Lip}(\mathcal{G})}.$ To prove this, we use the Closed Graph Theorem. Let $X=\{b\in{\rm Lip}(\mathcal{G}):\ b(1_{\mathcal{G}})=0\}.$ Clearly, $(X,\|\cdot\|_{{\rm Lip}(\mathcal{G})})$ is a Banach space. We claim that the graph of the mapping $A:b\to \mathrm{X}_i[M_b,R_j]$ from $X$ to $B(L^p(\mathcal{G}))$ is closed.

Indeed, let $b_k\to b$ in $X$ and be such that $A(b_k)\to A$ in $B(L^p(\mathcal{G})).$ Let $\phi,\psi\in C^{\infty}_c(\mathcal{G}).$ We have
$$\langle A(b_k)\phi,\psi\rangle=-\langle[M_{b_k},R_j]\phi,\mathrm{X}_i\psi\rangle=\langle R_j(b_k\phi),\mathrm{X}_i\psi\rangle-\langle b_k\cdot R_j\phi,\mathrm{X}_i\psi\rangle.$$
We have
$$|(b_k-b)(g)|=|(b_k-b)(g)-(b_k-b)(1_{\mathcal{G}})|\leq c_G\|b_k-b\|_{{\rm Lip}(\mathcal{G})}\cdot\rho(g).$$
Hence, $b_k\to b$ pointwise. Hence, $b_k\cdot R_j\phi\to b\cdot R_j\phi$ and $R_j(b_k\phi)\to R_j(b\phi)$ in $L^2(\mathcal{G}).$ In particular,
$$\langle R_j(b_k\phi),\mathrm{X}_i\psi\rangle\to \langle R_j(b\phi),\mathrm{X}_i\psi\rangle,\quad \langle b_k\cdot R_j\phi,\mathrm{X}_i\psi\rangle\to\langle b\cdot R_j\phi,\mathrm{X}_i\psi\rangle,\quad k\to\infty.$$
In other words,
$$\langle A(b_k)\phi,\psi\rangle\to \langle A(b)\phi,\psi\rangle,\quad k\to\infty.$$
However, by assumption,
$$\langle A(b_k)\phi,\psi\rangle\to \langle A\phi,\psi\rangle,\quad k\to\infty.$$
It follows that
$$\langle A(b)\phi,\psi\rangle=\langle A\phi,\psi\rangle,\quad \phi,\psi\in C^{\infty}_c(\mathcal{G}).$$
Since $C^{\infty}_c(\mathcal{G})$ is dense in $L^p(\mathcal{G})$ and also in $L^{\frac{p}{p-1}}(\mathcal{G}),$ and since $A$ and $A(b)$ are both bounded on $L^p(\mathcal{G}),$ it follows that $A=A(b).$ 

By the preceding paragraph, the graph of the mapping $b\to A(b)$ is closed. Hence, the mapping $b\to A(b)$ is bounded. This completes the proof.
\end{proof}

	\subsection{The lower bound of the norm of the commutator $[M_b, \nabla R_{j}]$}
	In this subsection we prove the lower bound on $[M_b,\nabla R_j],$ that is
	\[
	\|b\|_{\operatorname{Lip}(\mathcal{G})} \lesssim \|[M_b,\nabla R_j]\|_{L^p(\mathcal{G})\to L^p(\mathcal{G})}.
	\]
	
\begin{lemma}\label{lem3.8} 
Let $1\leq i,j\leq n.$ There exist a constant $c_{\mathcal{G}}$ and $g_0\in \mathcal{G}$ (depending on $i$ and $j$) such that $|K^i_j(g^{-1})|\geq c_{\mathcal{G}}$ for every $g\in B(g_0,2).$
\end{lemma}
\begin{proof} The assertion immediately follows from the continuity and homogeneity of the function $K^i_j.$
\end{proof}

\begin{lemma}\label{lem3.9} 
Let $1\leq i,j\leq n.$ There exists a strictly positive constant $c_{\mathcal{G}}$ such that for all  $g \in \mathcal G$ and $r > 0$, we can find $g_*\in \mathcal{G}$ (depending on $i,$ $j,$ $g$ and $r$) such that
$$|K^i_j(g_2^{-1}g_1)|\geq c_{\mathcal{G}}r^{-\mathbb{Q}-1},\quad g_1 \in B(g,r),\quad g_2 \in B(g_*,r).$$
Here, $c_{\mathcal{G}}$ is the constant from Lemma \ref{lem3.8}.
\end{lemma}
\begin{proof} Let $g_0$ be as in Lemma \ref{lem3.8} and set $g_*=g\cdot\delta_r(g_0).$ If $g_1 \in B(g,r),$ then there exists $h_1\in B(1_{\mathcal{G}},1)$ such that $g_1=g\cdot\delta_r(h_1).$ If $g_2 \in B(g_*,r),$ then there exists $h_2\in B(1_{\mathcal{G}},1)$ such that $g_2=g\cdot\delta_r(g_0)\cdot\delta_r(h_2).$ Hence,
$$g_2^{-1}g_1=\delta_r(h_2^{-1}g_0^{-1}h_1).$$
By homogeneity,
$$K^j_j(g_2^{-1}g_1)=r^{-\mathbb{Q}-1}K^i_j((h_1^{-1}g_0h_2)^{-1}).$$
It remains to note that $h_1^{-1}g_0h_2\in B(g_0,2)$ and the assertion follows from Lemma \ref{lem T1 T*1}.
\end{proof}

\begin{lemma}\label{new ji lemma} If $b$ is locally integrable, then
$$c_{\mathcal{G}}\sup_{\substack{g\in\mathcal{G}\\ r>0}}r^{-\mathbb{Q}-1}\int_{B(g,r)}|b(\tilde g)-m_{B(g,r)}b|d\tilde g\leq 8\|[M_b,X_iR_j]\|_{L_p(\mathcal{G})\to L_p(\mathcal{G})},$$
where $c_{\mathcal{G}}$ is a constant in Lemma \ref{lem3.8}.
\end{lemma}
\begin{proof} Fix a ball $B(g,r)$ and find a ball $B(g_*,r)$ as in Lemma \ref{lem3.9}. By continuity, we either have 
$$K_j^i(g_2^{-1}g_1)\geq c_{\mathcal{G}}r^{-\mathbb{Q}-1},\quad g_1\in B(g,r),\quad g_2\in B(g_*,r)$$
or
$$K_j^i(g_2^{-1}g_1)\leq -c_{\mathcal{G}}r^{-\mathbb{Q}-1},\quad g_1\in B(g,r),\quad g_2\in B(g_*,r).$$
For definiteness, we restrict ourselves to the first case.

Let $c$ be the median value of $b$ over $B(g_*,r),$ that is,
$$\left|\left\{g_2\in B(g_*,r):\, b(g_2)<c\right\}\right|,   \left|\left\{g_2\in B(g_*,r):\, b(g_2)>c\right\}\right|\leq\frac12r^{\mathbb{Q}}.$$
Set
$$E_1=\left\{g_1\in B(g,r):\, b(g_1)\leq c\right\},\quad 
E_2=\left\{g_1\in B(g,r):\, b(g_1)>c\right\}$$
and
$$F_1=\left\{g_2\in B(g_*,r):\, b(g_2)\geq c\right\},\quad F_2
=\left\{g_2\in B(g_*,r):\, b(g_2)\leq c\right\}.$$
By construction, $|F_1|,|F_2|\geq \frac12r^{\mathbb{Q}}.$

By H\"older's inequality,
$$|\langle [M_b,X_iR_j]\chi_{F_k},\chi_{E_k}\rangle|\leq \|[M_b,X_iR_j]\|_{L_p(\mathcal{G})\to L_p(\mathcal{G})}r^{\mathbb{Q}},\quad k=1,2.$$
Clearly,
$$\langle [M_b,X_iR_j]\chi_{F_1},\chi_{E_1}\rangle=\int_{E_1\times F_1}(b(g_1)-b(g_2))K_j^i(g_2^{-1}g_1)dg_1dg_2.$$
Since
$$b(g_1)-b(g_2)\leq b(g_1)-c\leq 0,\quad K_j^i(g_2^{-1}g_1)\geq c_{\mathcal{G}}r^{-\mathbb{Q}-1},\quad g_1\in E_1,\quad g_2\in F_1,$$
it follows that
$$|\langle [M_b,X_iR_j]\chi_{F_1},\chi_{E_1}\rangle|\geq c_{\mathcal{G}}r^{-\mathbb{Q}-1}\int_{E_1\times F_1}|b(g_1)-c|dg_1dg_2= \frac1{2r}c_{\mathcal{G}}\int_{E_1}|b(g_1)-c|dg_1.$$
Thus,
$$c_{\mathcal{G}}\int_{E_1}|b(g_1)-c|dg_1\leq 2r^{\mathbb{Q}+1}\|[M_b,X_iR_j]\|_{L_p(\mathcal{G})\to L_p(\mathcal{G})}.$$
Similarly,
$$c_{\mathcal{G}}\int_{E_2}|b(g_1)-c|dg_1\leq 2r^{\mathbb{Q}+1}\|[M_b,X_iR_j]\|_{L_p(\mathcal{G})\to L_p(\mathcal{G})}.$$
Adding the last two inequalities, we obtain
$$c_{\mathcal{G}}\int_{B(g,r)}|b(g_1)-c|dg_1\leq 4r^{\mathbb{Q}+1}\|[M_b,X_iR_j]\|_{L_p(\mathcal{G})\to L_p(\mathcal{G})}.$$
Hence,
$$c_{\mathcal{G}}\int_{B(g,r)}|m_{B(g,r)}b-c|dg_1\leq 4r^{\mathbb{Q}+1}\|[M_b,X_iR_j]\|_{L_p(\mathcal{G})\to L_p(\mathcal{G})}.$$
Adding the last two inequalities, we complete the proof.
\end{proof}

\begin{proof}[Proof of the lower bound in Theorem \ref{Thom1}]
The assertion follows from Lemma \ref{new ji lemma} and Lemma \ref{eq-Rlem1}.
\end{proof}

\section{Proof of Corollary \ref{cor2}}\label{Section co}
\subsection{The upper bound of the norm of the commutator $[M_{b_1},[M_{b_2},\mathrm{X}_{\ell}R_{j}\mathrm{X}_{i}]]$}

\begin{lemma}\label{second cz verification lemma}
If $T$ is a convolution operator with the kernel $K({g'}^{-1}g)$ satisfying 
\begin{align}
&|K(g)| \leq c_K \frac{1}{\rho(g)^{\mathbb Q+2}},\label{eeeee1}\\
&|K(g')-K(g'')| \leq c_K\frac{\rho(g',g'')}{\rho(g')^{\mathbb Q+3}} \quad \text{if} \quad \rho(g')>2\rho(g',g''),\label{eeeee2},
\end{align}
then the kernel of $[M_{b_1},[M_{b_2},T]]$ is a standard kernel as in Definition \ref{def CZO}.
\end{lemma}
\begin{proof} The argument repeats the one in Lemma \ref{first cz verification lemma} and is, therefore, omitted.
\end{proof}

\begin{lemma} The operator $\mathrm{X}_{\ell}R_{j}\mathrm{X}_{i}$ satisfies \eqref{eeeee1}--\eqref{eeeee2}.
\end{lemma}
\begin{proof} The argument is similar to that in Lemma \ref{first czo lemma} and is, therefore, omitted.
\end{proof}

\begin{lemma}
$[M_{b_1},[M_{b_2},\mathrm{X}_{\ell}R_{j}\mathrm{X}_{i}]](1) , [M_{b_1},[M_{b_2},\mathrm{X}_{\ell}R_{j}\mathrm{X}_{i}]]^*(1)  \in \operatorname{BMO}(\mathcal G)$
\end{lemma}
\begin{proof}
We are going to show $[M_{b_1},[M_{b_2},\mathrm{X}_{\ell}R_{j}\mathrm{X}_{i}]](1) , [M_{b_1},[M_{b_2},\mathrm{X}_{\ell}R_{j}\mathrm{X}_{i}]]^*(1)  \in \operatorname{BMO}(\mathcal G)$. Note that
\begin{align*}
	[M_{b_1},[M_{b_2},\mathrm{X}_{\ell}R_{j}\mathrm{X}_{i}]](1)
	&=[M_{b_1},[M_{b_2},\mathrm{X}_{\ell}R_{j}]\mathrm{X}_{i}](1)-[M_{b_1},\mathrm{X}_{\ell}R_{j}\mathrm{X}_{i} M_{\mathrm{X}_{i}b_2} ](1)\\
	&= -[M_{b_2}, X_\ell R_j](\mathrm{X}_ib_1) -[M_{b_1}, X_\ell R_j](\mathrm{X}_ib_2).
\end{align*}
Similarly,
\begin{align*}
	[M_{b_1},[M_{b_2},\mathrm{X}_{\ell}R_{j}\mathrm{X}_{i}]]^*(1)
	&=[M_{b_1},[M_{b_2},\mathrm{X}_{\ell}R_{j}^*]\mathrm{X}_{i}](1)-[M_{b_1},\mathrm{X}_{\ell}R_{j}^*\mathrm{X}_{i} M_{\mathrm{X}_{i}b_2} ](1)\\
	&= -[M_{b_2}, X_\ell R_j^*](\mathrm{X}_ib_1) -[M_{b_1}, X_\ell R_j^*](\mathrm{X}_ib_2).
\end{align*}

Then by noting  that both $[M_{b_1},\mathrm{X}_{\ell}R_{j}]$ and $[M_{b_2},\mathrm{X}_{\ell}R_{j}^*]$ 
are standard Calder\'on--Zygmund operators on $\mathcal G$. Hence, they
are bounded operators from $L^{\infty}(\mathcal G)$ to ${\rm BMO}(\mathcal G)$ (see for example the argument in \cite[Section 6]{FS1982}). 
Then we have  that $[M_{b_1},[M_{b_2},\mathrm{X}_{\ell}R_{j}\mathrm{X}_{i}]] (1) \in \operatorname{BMO}(\mathcal G)$.
Similarly, we conclude that $[M_{b_1},[M_{b_2},\mathrm{X}_{\ell}R_{j}\mathrm{X}_{i}]]^{*} (1) \in \operatorname{BMO}(\mathcal G)$.
The proof is complete.
\end{proof}

\begin{lemma}\label{pre-wbp second lemma} If $b_1,b_2\in {\rm Lip}(\mathcal{G}),$ then
$$\|M_{\chi_{B(g,R)}}[M_{b_1},[M_{b_2},R_j]]M_{\chi_{B(g,R)}}\|_{L^2(\mathcal{G})\to L^2(\mathcal{G})}=O(R^2).$$
\end{lemma}
\begin{proof} The argument repeats the one in Lemma \ref{pre-wbp first lemma} and is, therefore, omitted.
\end{proof}

\begin{lemma}
$[M_{b_1},[M_{b_2},\mathrm{X}_{\ell}R_{j}\mathrm{X}_{i}]]$ satisfies  the Weak Boundedness Property. 
\end{lemma}
\begin{proof} Let $\varphi,\psi\in C^{\infty}_c(\mathcal{G})$ be supported in the ball $B(g_0,R).$ We have
$$[M_{b_2},X_{\ell}R_jX_i]=-M_{X_{\ell}b_2}R_jX_i+X_{\ell}[M_{b_2},R_j]X_i-X_{\ell}R_jM_{X_ib_2}.$$
Thus,
\begin{align*}
[M_{b_1},[M_{b_2},X_{\ell}R_jX_i]]&=-M_{X_{\ell}b_2}[M_{b_1},R_jX_i]+[M_{b_1},X_{\ell}[M_{b_2},R_j]X_i]-[M_{b_1},X_{\ell}R_j]M_{X_ib_2}\\
&=-M_{X_{\ell}b_2}[M_{b_1},R_j]X_i+M_{X_{\ell}b_2}R_jM_{X_ib_1}-M_{X_{\ell}b_1}[M_{b_2},R_j]X_i \\
&\qquad+X_{\ell}[M_{b_1},[M_{b_2},R_j]]X_i-X_{\ell}[M_{b_2},R_j]M_{X_ib_1}
+M_{X_{\ell}b_1}R_jM_{X_ib_2}-X_{\ell}[M_{b_1},R_j]M_{X_ib_2}.
\end{align*}
Hence,
\begin{align*}
\langle [M_{b_1},[M_{b_2},\mathrm{X}_{\ell}R_{j}\mathrm{X}_{i}]]  (\varphi), \psi\rangle&=-\langle [M_{b_1},R_j](X_i\phi),X_{\ell}b_2\cdot\psi\rangle-\langle [M_{b_2},R_j](X_i\phi),X_{\ell}b_1\cdot\psi\rangle\\
&\quad+\langle R_j(X_ib_1\cdot\phi),X_{\ell}b_2\cdot\psi\rangle+\langle R_j(X_ib_2\cdot\phi),X_{\ell}b_1\cdot\psi\rangle\\
&\quad+\langle [M_{b_2},R_j](X_ib_1\cdot\phi),X_{\ell}\psi\rangle+\langle [M_{b_1},R_j](X_ib_2\cdot\phi),X_{\ell}\psi\rangle\\
&\quad-\langle [M_{b_1},[M_{b_2},R_j]](X_i\phi),X_{\ell}\psi\rangle.
\end{align*}
Consequently,
\begin{align*}
|\langle [M_{b_1},[M_{b_2},\mathrm{X}_{\ell}R_{j}\mathrm{X}_{i}]]  (\varphi), \psi\rangle|
&\leq \|M_{\chi_{B(g,R)}}[M_{b_1},R_j]M_{\chi_{B(g,R)}}\|_{L_2(\mathcal{G})\to L_2(\mathcal{G})}\|X_i\phi\|_{L_2(\mathcal{G})}\|X_{\ell}b_2\cdot\psi\|_{L_2(\mathcal{G})}\\
&\quad+\|M_{\chi_{B(g,R)}}[M_{b_2},R_j]M_{\chi_{B(g,R)}}\|_{L_2(\mathcal{G})\to L_2(\mathcal{G})}\|X_i\phi\|_{L_2(\mathcal{G})}\|X_{\ell}b_1\cdot\psi\|_{L_2(\mathcal{G})}\\
&\quad+\|R_j\|_{L_2(\mathcal{G})\to L_2(\mathcal{G})}\|X_ib_1\cdot\phi\|_{L_2(\mathcal{G})}\|X_{\ell}b_2\cdot\psi\|_{L_2(\mathcal{G})}+\|R_j\|_{L_2(\mathcal{G})\to L_2(\mathcal{G})}\|X_ib_2\cdot\phi\|_{L_2(\mathcal{G})}\|X_{\ell}b_1\cdot\psi\|_{L_2(\mathcal{G})}\\
&\quad+\|M_{\chi_{B(g,R)}}[M_{b_2},R_j]M_{\chi_{B(g,R)}}\|_{L_2(\mathcal{G})\to L_2(\mathcal{G})}\|X_ib_1\cdot\phi\|_{L_2(\mathcal{G})}\|X_{\ell}\psi\|_{L_2(\mathcal{G})}\\
&\quad+\|M_{\chi_{B(g,R)}}[M_{b_2},R_j]M_{\chi_{B(g,R)}}\|_{L_2(\mathcal{G})\to L_2(\mathcal{G})}\|X_ib_1\cdot\phi\|_{L_2(\mathcal{G})}\|X_{\ell}\psi\|_{L_2(\mathcal{G})}\\
&\quad+\|M_{\chi_{B(g,R)}}[M_{b_1},[M_{b_2},R_j]]M_{\chi_{B(g,R)}}\|_{L_2(\mathcal{G})\to L_2(\mathcal{G})}\|X_i\phi\|_{L_2(\mathcal{G})}\|X_{\ell}\psi\|_{L_2(\mathcal{G})}.
\end{align*}

The assertion follows now from Lemma \ref{pre-wbp first lemma} and Lemma \ref{pre-wbp second lemma}.
\end{proof}

\subsection{The lower bound of the norm of the commutator  $[M_{b_1},[M_{b_2},\mathrm{X}_{\ell}R_{j}\mathrm{X}_{i}]]$}

\begin{lemma}\label{lem4.6} 
Let $1\leq i,j,\ell\leq n.$ There exists a constant $c_{\mathcal{G}}$ and $g_0\in \mathcal{G}$ (depending on $i$ and $j$) such that $|K^i_{j,\ell}(g^{-1})|\geq c_{\mathcal{G}}$ for every $g\in B(g_0,2).$
\end{lemma}
\begin{proof} The assertion immediately follows from the continuity and homogeneity of the function $K^i_{j,\ell}.$
\end{proof}

\begin{lemma}\label{lem4.7} 
Let $1\leq i,j,\ell\leq n.$ There exists a strictly positive constant $c_{\mathcal{G}}$ such that for all  $g \in \mathcal G$ and $r > 0$, we can find $g_*\in \mathcal{G}$ (depending on $i,$ $j,$ $\ell,$ $g$ and $r$) such that
$$|K^i_{j,l}(g_2^{-1}g_1)|\geq c_{\mathcal{G}}r^{-\mathbb{Q}-2},\quad g_1 \in B(g,r),\quad g_2 \in B(g_*,r).$$
Here, $c_{\mathcal{G}}$ is the constant from Lemma \ref{lem4.6}.
\end{lemma}

\begin{proof}[Proof of the lower estimate in Corollary \ref{cor2}] We fix a ball $B(g,r)$ and consider the ball $B(g_*,r)$ given by Lemma \ref{lem4.7}.

We fix constants $c_1$ and $c_2$ such that
$$|\{h\in B(g_*,r):\ b_1(h)<c_1,\ b_2(h)<c_2\}|,|\{h\in B(g_*,r):\ b_1(h)<c_1,\ b_2(h)>c_2\}|\leq \frac14r^{\mathbb{Q}},$$
$$|\{h\in B(g_*,r):\ b_1(h)>c_1,\ b_2(h)>c_2\}|,|\{h\in B(g_*,r):\ b_1(h)>c_1,\ b_2(h)<c_2\}|\leq \frac14r^{\mathbb{Q}}.$$
The rest of the argument repeats that in Lemma \ref{new ji lemma} and is, therefore, omitted.
\end{proof}

\section{Proof of Theorem \ref{main3}}\label{Scetion T3}

Most of the operators in this section will be on the Hilbert space $L^2(\mathcal{G}),$ and here we will abbreviate ``bounded on $L^2(\mathcal{G})$" as ``bounded". In general, we denote by $\BBB(H)$ the algebra of all bounded linear endomorphisms of a Hilbert space $H.$ The operator norm on $\BBB(H)$ is denoted $\|\cdot\|_{\infty}.$

Note that if the homogeneous dimension $\mathbb Q\leq 2,$ then $\mathcal{G}$ is abelian. We will therefore assume throughout this section that $\mathbb Q>2.$

We will also assume that $\mathcal{G}$ is two-step. This means that the stratification of $\mathfrak{g}$ stops after commutators of order $2,$ that is
\[
\mathfrak{g} = \mathrm{span}\{\mathrm{X}_j,[\mathrm{X}_j,\mathrm{X}_k]\}_{j,k=1}^n
\]
and the commutators $[\mathrm{X}_j,\mathrm{X}_k]$ are central. This includes Heisenberg groups of any dimension.

Recall as in the introduction that given a compact operator $T$ on a Hilbert space $H,$ we denote by $\mu(T) = \{\mu(n,T)\}_{n=0}^\infty$
the singular value sequence of $T.$ We say that $T$ in $\SSS^p(H)$ for $0<p<\infty$ if $\mu(T) \in \ell^p(\mathbb{N}),$ that is if
\[
\|T\|_p := \left(\sum_{n=0}^\infty \mu(n,T)^p\right)^{\frac{1}{p}}<\infty.
\]
Similarly, we say that $T \in \SSS^{p,\infty}(H)$ if $\mu(T) \in \ell^{p,\infty}(\mathbb{N}),$ 
\[
\|T\|_{p,\infty} := \sup_{n\geq 0}\, (n+1)^{\frac1r}\mu(n,T)<\infty.
\]
The Hilbert-Schmidt space is $\SSS^2(H),$ which is a Hilbert space with inner product
\[
\langle T,S\rangle_{\SSS^2} = \mathrm{Tr}(T^*S).
\]

We recall the definition and some properties of double operator integrals. General surveys of double operator integration include \cite{Peller-moi-2016,SkripkaTomskova}. 
We will define double operator integrals on $\BBB(H)$ using the same approach as \cite{PS-crelle} (see also \cite{PSW}). Let $H$ be a complex separable Hilbert space.
Let $D_0$ and $D_1$ be self-adjoint (potentially unbounded) operators on $H$ and let $E^0$ and $E^1$ be the associated spectral measures on $\mathbb{R}.$

For any $A,B\in \SSS^2(H)$, the measure
\begin{align*}
	(\lambda,\mu)\mapsto {\rm Tr}(B^*dE^{0}(\lambda)AdE^{1}(\mu))
\end{align*}
is a countably additive complex-valued measure on $\mathbb{R}^{2}$. For any $\phi\in L^\infty(\mathbb{R}^2)$, $\phi$ is said to be integrable if there is an operator $T_{\phi}^{D_0,D_1}\in \BBB(\SSS^2(H))$ such that for all $A,B\in \SSS^2(H)$,
\begin{align*}
	{\rm Tr}(B^*T_{\phi}^{D_0,D_1}(A))=\int_{\mathbb{R}^{2}}\phi(\lambda,\mu){\rm Tr}(B^*dE^{0}(\lambda)AdE^{1}(\mu)).
\end{align*}
The double operator integral is defined on $A \in \SSS^2(H)$ by
\begin{align}\label{double integral}
	T_{\phi}^{D_0,D_1}(A):=\int_{\mathbb{R}^{2}}\phi(\lambda,\mu)dE^0(\lambda)AdE^1(\mu).
\end{align}

For certain $\phi$, it can be proved that there is a constant $C$ such that
\[
\|T_{\phi}^{D_0,D_1}(A)\|_\infty \leq C\|A\|_{\infty},\quad A \in \SSS^2(H).
\]
In this case, there exists a unique continuous linear extension of $T_{\phi}^{D_0,D_1}$ to $\BBB(H)$ with norm at most $C,$ using the weak density of $\SSS^2(H)$ in $\BBB(H).$ We denote the continuous extension by the same symbol $T_{\phi}^{D_0,D_1}.$

In particular, $T_{\phi}^{D_0,D_1}$ is bounded on $\BBB(H)$ for those functions $\phi$ admitting a decomposition
\[
\phi(\lambda,\mu) = \int_{\Omega} \alpha(\lambda,\omega)\beta(\mu,\omega)\;d\nu(\omega),\quad \lambda,\mu\in \mathbb{R},
\]
where $(\Omega,\Sigma,\nu)$ is a $\sigma$-finite complex measure space, and $\alpha,\beta$ are measurable functions such that
\[
C:= \int_{\Omega} \sup_{\lambda \in \mathbb{R}} |\alpha(\lambda,\omega)|\sup_{\mu\in \mathbb{R}} |\beta(\mu,\omega)|\,d|\nu|(\omega) < \infty.
\]
In this case $T_{\phi}^{D_0,D_1}$ defines a bounded linear operator from $\BBB(H)$ to itself with norm at most $C.$

By an argument involving duality and interpolation, it follows that for every $1\leq p\leq \infty$ we have
\[
\|T_{\phi}^{D_0,D_1}\|_{\SSS^p\to \SSS^p} \leq C.
\]
Moreover, for $1<p<\infty$ we have
\[
\|T_{\phi}^{D_0,D_1}\|_{\SSS^{p,\infty}\to \SSS^{p,\infty}} \leq C.
\]

We record here some basic identities, such as
\begin{equation}\label{doi_cov}
	T^{D_0,D_1}_{\phi(f(\lambda),g(\mu))} = T^{f(D_0),g(D_1)}_{\phi},
\end{equation}
\begin{equation}\label{doi_bimodule}
	T^{D_0,D_1}_{a(\lambda)\phi(\lambda,\mu)b(\mu)}(X) = T^{D_0,D_1}_{\phi}(a(D_0)Xb(D_1))
\end{equation}
and 
\begin{equation}\label{lowner_identity}
	T^{D_0,D_1}_{\frac{f(\lambda)-f(\mu)}{\lambda-\mu}}(D_0X-XD_1) = f(D_0)X-Xf(D_1).
\end{equation}

Of particular importance is the function
\[
\phi(\lambda,\mu) = \frac{2\sqrt{\lambda\mu}}{\lambda+\mu},\quad \lambda,\mu\in \mathbb{R}
\]
with the convention that $\phi(0,0) = 1.$ This function can be decomposed as
\[
\phi(\lambda,\mu) = \int_{-\infty}^{\infty} \lambda^{is}\mu^{-is}\frac{1}{2\cosh(\pi s)}\,ds.
\]
See \cite[Example 3.6]{HiaiKosaki}.

It follows that
\begin{equation}\label{am_gm_inequality}
	\|T^{D_0,D_1}_{\frac{2(\lambda\mu)^{\frac12}}{\lambda+\mu}}\|_{\BBB(H)\to \BBB(H)} \leq \int_{-\infty}^\infty \frac{1}{2\cosh(\pi s)}\,ds = 1.
\end{equation}

The proof of Theorem \ref{main3} is based on some of the results in \cite{MSZ_cwikel},
which we recall here.
\begin{theorem}[Theorems 1.1 and 1.2 in \cite {MSZ_cwikel}]\label{cwikel_estimates}
	Let $\mathcal{G}$ be a stratified Lie group of homogeneous dimension $\mathbb Q$ and with sublaplacian $\Delta,$ as above. 
	
	\begin{enumerate}
		\item{}\label{one_sided_cwikel} If $p>2$ and $f \in L^p(\mathcal{G}),$ then $M_f(-\Delta)^{-\frac{\mathbb Q}{2p}}$ is well defined as a compact operator on $L^2(\mathcal{G}),$ and moreover
		\[
		\|M_f(-\Delta)^{-\frac{\mathbb Q}{2p}}\|_{p,\infty} \leq c_p\|f\|_{L^p(\mathcal{G})}.
		\] 
		\item{} If $p>1$ and $f \in L^p(\mathcal{G}),$ then
		\[
		(-\Delta)^{-\frac{\mathbb Q}{4p}}M_f(-\Delta)^{-\frac{\mathbb Q}{4p}}
		\]
		is well-defined as a compact operator on $L^2(\mathcal{G}),$ and moreover
		\[
		\|(-\Delta)^{-\frac{\mathbb Q}{4p}}M_f(-\Delta)^{-\frac{\mathbb Q}{4p}}\|_{p,\infty} \leq c_p\|f\|_{L^p(\mathcal{G})}.
		\]
	\end{enumerate}
\end{theorem}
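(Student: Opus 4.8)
The plan is to deduce the two‑sided estimate (2) from the one‑sided estimate (1), and to prove (1) by splitting the singular multiplier $\lambda^{-\mathbb Q/(2p)}$ into dyadic spectral bands, splitting $f$ into level sets, and reassembling the resulting building blocks via their quasi‑orthogonality. For the reduction of (2) to (1), assume (1) holds for every exponent greater than $2$; given $p>1$ and $f\in L^p(\mathcal{G})$, write $f=u|f|$ with $|u|\le 1$ and $M_f=M_{u}M_{|f|^{1/2}}M_{|f|^{1/2}}$. Since $(-\Delta)^{-\mathbb Q/(4p)}$ is self‑adjoint,
\[
(-\Delta)^{-\frac{\mathbb Q}{4p}}M_f(-\Delta)^{-\frac{\mathbb Q}{4p}}=\big(M_{\bar u|f|^{1/2}}(-\Delta)^{-\frac{\mathbb Q}{4p}}\big)^{*}\big(M_{|f|^{1/2}}(-\Delta)^{-\frac{\mathbb Q}{4p}}\big).
\]
Because $2p>2$ and $\tfrac{\mathbb Q}{2(2p)}=\tfrac{\mathbb Q}{4p}$, part (1) with exponent $2p$ shows each factor lies in $\SSS^{2p,\infty}$ with quasinorm $\lesssim\||f|^{1/2}\|_{L^{2p}}=\|f\|_{L^p}^{1/2}$, and the Hölder property $\SSS^{2p,\infty}\cdot\SSS^{2p,\infty}\subseteq\SSS^{p,\infty}$ gives (2); compactness follows as a product of compact operators is compact. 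It therefore remains to prove (1).

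\emph{Setup for (1).} Fix $p>2$, put $s=\mathbb Q/(2p)$; by density it suffices to treat $f\in L^p\cap L^\infty$ with bounded support. Choose $\psi\in C_c^\infty(1/2,2)$ with $\sum_{k\in\ZZ}\psi(2^{-k}\lambda)=1$ for $\lambda>0$ and set $\phi_k(\lambda)=\lambda^{-s}\psi(2^{-k}\lambda)$, so that $(-\Delta)^{-s}=\sum_{k\in\ZZ}\phi_k(-\Delta)$. By the dilation identity $\delta_{1/r}\circ\Delta\circ\delta_r=r^2\Delta$ together with Hulanicki's theorem, $\phi_k(-\Delta)$ is convolution by $\kappa_k=2^{k(\mathbb Q/2-s)}\Psi(\delta_{2^{k/2}}(\cdot))$, where $\Psi$ is the Schwartz convolution kernel of the fixed $-\Delta$‑multiplier $\lambda\mapsto\lambda^{-s}\psi(\lambda)$. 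Two soft facts are used repeatedly: the spectral measure of $-\Delta$ paired with the Dirac mass at the origin is $c\,\lambda^{\mathbb Q/2-1}\,d\lambda$ (from $p_t(o)=c\,t^{-\mathbb Q/2}$ and uniqueness of the Laplace transform); and for any bounded Borel function $h$, $\|M_gh(-\Delta)\|_{2}=\|g\|_{L^2(\mathcal{G})}\big(\int_0^\infty|h(\lambda)|^2\lambda^{\mathbb Q/2-1}\,d\lambda\big)^{1/2}$ up to a constant, because the Hilbert--Schmidt kernel of $M_gh(-\Delta)$ is $g(g_1)\kappa_h(g_2^{-1}g_1)$ with $\kappa_h$ the convolution kernel of $h(-\Delta)$.

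\emph{Building blocks and reassembly.} Decompose $f=\sum_{m\in\ZZ}f_m$ into level sets $f_m=f\chi_{E_m}$, $E_m=\{2^m\le|f|<2^{m+1}\}$ (each of finite measure), and write $S_{m,k}=M_{f_m}\phi_k(-\Delta)$. The two soft facts give
\[
\|S_{m,k}\|_{2}\lesssim 2^{m}|E_m|^{1/2}\,2^{\frac{k\mathbb Q}{2}(\frac12-\frac1p)},\qquad \|S_{m,k}\|_{\infty}\le 2^{m}\|\kappa_k\|_{L^1(\mathcal{G})}\lesssim 2^{m}\,2^{-\frac{k\mathbb Q}{2p}},
\]
so $S_{m,k}$ has at most $\approx|E_m|\,2^{k\mathbb Q/2}$ singular values comparable to $2^{m-k\mathbb Q/(2p)}$ and none exceeding a fixed multiple of $2^{m-k\mathbb Q/(2p)}$. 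The family $\{S_{m,k}\}$ is quasi‑orthogonal from both sides: the left supports $\{E_m\}_m$ are disjoint, and, after separating even and odd $k$, the spectral supports of the $\phi_k$ are disjoint, so that $S_{m,k}S_{m',k'}^{*}=0$ for $k\neq k'$; the finite overlaps and the Schwartz tails of $\kappa_k$ are absorbed by a Schur‑type estimate. A weak‑Schatten reassembly lemma then controls $\|M_f(-\Delta)^{-s}\|_{p,\infty}$ by the $\ell^{p,\infty}$‑quasinorm of the merged singular‑value sequence, and counting the blocks yields, for every $\tau>0$,
\[
\#\{\text{singular values}>\tau\}\ \lesssim\ \sum_{m}\ \sum_{k\le \frac{2p}{\mathbb Q}(m-\log_2\tau)}|E_m|\,2^{k\mathbb Q/2}\ \lesssim\ \tau^{-p}\sum_{m}|E_m|\,2^{pm}\ \lesssim\ \tau^{-p}\|f\|_{L^p}^{p},
\]
which is precisely $\|M_f(-\Delta)^{-s}\|_{p,\infty}\lesssim\|f\|_{L^p}$. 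The hypothesis $p>2$ enters exactly here: it is what makes the geometric sum over the low‑frequency scales $k\to-\infty$ converge, i.e. $\tfrac12-\tfrac1p>0$; for $p\le 2$ the operator fails to be compact. The general case follows by density, and membership in $\SSS^{p,\infty}$ with $p<\infty$ forces compactness.

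\emph{Main obstacle.} The delicate point is the reassembly. The family $\{S_{m,k}\}$ is orthogonal on the left in the index $m$ but only on the right in the index $k$, and even that only up to finite overlaps and rapidly decaying tails. Turning this ``bipartite'' quasi‑orthogonality into genuine control of the merged singular values in $\ell^{p,\infty}$ — rather than the much weaker bound obtained by summing quasinorms, which would be off by a power in the number of level sets — is where the quantitative weak‑Schatten paving estimates of \cite{MSZ_cwikel} (in the spirit of the abstract Cwikel machinery of Levitina--Sukochev--Zanin) are required, and where most of the work of that paper lies; the spectral scaling, Hulanicki's theorem, and the heat‑kernel inputs above are comparatively routine.
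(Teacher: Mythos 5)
You are not really being compared against an internal argument here: the paper does not prove this statement at all --- it is quoted, with attribution, as Theorems 1.1 and 1.2 of \cite{MSZ_cwikel}, and is used as a black box. Judged on its own terms, your reduction of (2) to (1) is correct and complete: the factorization $(-\Delta)^{-\frac{\mathbb Q}{4p}}M_f(-\Delta)^{-\frac{\mathbb Q}{4p}}=\bigl(M_{\bar u|f|^{1/2}}(-\Delta)^{-\frac{\mathbb Q}{4p}}\bigr)^{*}\bigl(M_{|f|^{1/2}}(-\Delta)^{-\frac{\mathbb Q}{4p}}\bigr)$ together with part (1) at exponent $2p>2$ and the weak H\"older property $\SSS^{2p,\infty}\cdot\SSS^{2p,\infty}\subseteq\SSS^{p,\infty}$ gives exactly the symmetrized bound. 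Likewise your block estimates for $S_{m,k}=M_{f_m}\phi_k(-\Delta)$ (Hilbert--Schmidt norm via the Plancherel weight $c\lambda^{\mathbb Q/2-1}d\lambda$, operator norm via $\|\kappa_k\|_{L^1}$) and the arithmetic of the final count, including where $p>2$ enters, are right.

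The genuine gap is the ``weak-Schatten reassembly lemma'', and it is not merely an unproved detail: as you state it --- control of the singular values of $\sum_{m,k}S_{m,k}$ by the merged singular values of the blocks, given only left-orthogonality in $m$ and right-orthogonality in $k$ --- it is false. Take $T_{m,k}\colon H_k\to G_m$ with all $H_k,G_m$ one-dimensional and every entry equal to $1$, $1\le m,k\le N$: this family is bipartite-orthogonal in exactly your sense, each block has a single singular value $1$, yet the sum has top singular value $N$, so neither $\mu(n,T)\lesssim$ (merged sequence) nor the distributional inequality $\#\{\mu(T)>\tau\}\lesssim\sum_{m,k}\#\{\mu(S_{m,k})>c\tau\}$ can hold uniformly in $N$ (test $\tau\sim N$); for $p>2$ the merged $\ell^{p,\infty}$ quasinorm is $N^{2/p}\ll N$. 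Hence the crux cannot be a general paving principle for bipartite-orthogonal families: the actual proofs (Cwikel's original argument, Levitina--Sukochev--Zanin, and \cite{MSZ_cwikel} itself) never merge singular values; at each threshold they split the double sum into a finite-rank ``head'' (counted) plus a ``tail'' estimated in operator or Hilbert--Schmidt norm, i.e.\ they use the inequality $\mu(n_1+n_2,A+B)\le\mu(n_1,A)+\mu(n_2,B)$ together with the specific quantitative profile of the blocks (equivalently, a real-interpolation argument between $\SSS^2$ and a uniform-norm endpoint). Deferring precisely this step to \cite{MSZ_cwikel} is also circular in context, since the statement to be proved is exactly Theorems 1.1--1.2 of that paper --- although, to be fair, the paper under review does no more than cite it either.
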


The following product estimate follows the same line as classical results.
\begin{lemma}\label{product_lemma}
	If $f \in W^{1,\infty}(\mathcal{G})$ and $u \in W^{\frac12,2}(\mathcal{G}),$ then $uf \in W^{\frac12,2}(\mathcal{G}),$ with
	\[
	\|fu\|_{W^{\frac12,2}(\mathcal{G})} \lesssim \|f\|_{W^{1,\infty}(\mathcal{G})}\|u\|_{W^{\frac12,2}(\mathcal{G})}.
	\]
\end{lemma}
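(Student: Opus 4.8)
The plan is to prove Lemma~\ref{product_lemma} by complex interpolation, reducing it to the two endpoint cases $s=0$ and $s=1$, where the product estimate is elementary. Recall that the Sobolev spaces $W^{s,2}(\mathcal{G})$ attached to the sublaplacian $\Delta$ form a complex interpolation scale; in particular
$$[L^2(\mathcal{G}),\,W^{1,2}(\mathcal{G})]_{\frac12}=W^{\frac12,2}(\mathcal{G})$$
with equivalence of norms, by \cite[Section~4]{Folland1975}. Since multiplication by $f$ is a single linear operator, it therefore suffices to bound $M_f$ on $L^2(\mathcal{G})$ and on $W^{1,2}(\mathcal{G})$ separately and then invoke the Calder\'on complex interpolation theorem.

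The $L^2$ bound is trivial: $\|M_f\|_{L^2\to L^2}=\|f\|_{L^\infty(\mathcal{G})}\le\|f\|_{W^{1,\infty}(\mathcal{G})}$. For the $W^{1,2}$ bound, fix $u\in W^{1,2}(\mathcal{G})$. Since $f\in W^{1,\infty}(\mathcal{G})$, in particular $\mathrm{X}_kf\in L^\infty(\mathcal{G})$ for each $k$, so the distributional Leibniz rule $\mathrm{X}_k(fu)=(\mathrm{X}_kf)\,u+f\,\mathrm{X}_ku$ holds; this is justified by a standard approximation of $u$ in $W^{1,2}(\mathcal{G})$ by smooth functions (using the product rule for a Lipschitz function times a smooth one and passing to the limit in $L^2$). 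Hence
$$\|\mathrm{X}_k(fu)\|_{L^2}\le\|\mathrm{X}_kf\|_{L^\infty}\|u\|_{L^2}+\|f\|_{L^\infty}\|\mathrm{X}_ku\|_{L^2}\lesssim\|f\|_{W^{1,\infty}(\mathcal{G})}\|u\|_{W^{1,2}(\mathcal{G})},$$
and combining this with the $L^2$ estimate gives $\|M_f\|_{W^{1,2}\to W^{1,2}}\lesssim\|f\|_{W^{1,\infty}(\mathcal{G})}$.

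Interpolating $M_f$ between these two endpoints at $\theta=\tfrac12$ then yields
$$\|M_f\|_{W^{\frac12,2}\to W^{\frac12,2}}\le\|M_f\|_{L^2\to L^2}^{\frac12}\,\|M_f\|_{W^{1,2}\to W^{1,2}}^{\frac12}\lesssim\|f\|_{W^{1,\infty}(\mathcal{G})},$$
which is exactly the asserted estimate (and in particular $fu\in W^{\frac12,2}(\mathcal{G})$ whenever $u\in W^{\frac12,2}(\mathcal{G})$). The only inputs that are not entirely routine are the complex interpolation identity for the Folland Sobolev scale on a stratified group and the distributional product rule for $W^{1,\infty}\cdot W^{1,2}$; both are classical, which is the sense in which the lemma "follows the same line as classical results''. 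Alternatively one could avoid interpolation and argue directly: either from a Gagliardo-type description of $W^{\frac12,2}(\mathcal{G})$, splitting the double integral into the regions $\rho(g,g')<1$ (where $f\in\mathrm{Lip}_1(\mathcal{G})$ is used, the relevant kernel $\rho(g,g')^{-\mathbb Q-1}$ being gained back by the Lipschitz bound) and $\rho(g,g')\ge 1$ (where only $f\in L^\infty(\mathcal{G})$ is needed), or from the splitting $(-\Delta)^{\frac14}(fu)=M_f(-\Delta)^{\frac14}u+[(-\Delta)^{\frac14},M_f]u$ together with the $L^2(\mathcal{G})$-boundedness of the commutator $[(-\Delta)^{\frac14},M_f]$, which has order $-\tfrac12$.
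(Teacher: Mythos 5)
Your proposal is correct and follows essentially the same route as the paper: the paper also proves the bound by interpolating the multiplication operator between the endpoints $s=0$ (trivial $L^2$ bound by $\|f\|_{L^\infty}$) and $s=1$ (Leibniz rule giving $\|f\|_{\dot{W}^{1,\infty}}\|u\|_{L^2}+\|f\|_{L^\infty}\|u\|_{\dot{W}^{1,2}}$). Your added remarks on justifying the distributional Leibniz rule and on the interpolation identity for the Folland Sobolev scale only make explicit what the paper leaves implicit, so there is no substantive difference.
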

\begin{proof}
	We prove the more general estimate
	\[
	\|fu\|_{\dot{W}^{s,2}(\mathcal{G})} \lesssim (\|f\|_{\dot{W}^{1,\infty}(\mathcal{G})}+\|f\|_{\infty})(\|u\|_{\dot{W}^{s,2}(\mathcal{G})}+\|u\|_{L^2(\mathcal{G})}).
	\]
	for all $0\leq s\leq 1,$ by interpolation from the endpoints $s=0$ and $s=1.$ The $s=0$ case is immediate. For the $s=1$ case, the left hand side is
	\[
	\sum_{k=1}^n \|\mathrm{X}_k(fu)\|_{L^2(\mathcal{G})}
	\]
	which, by the Leibniz rule, is bounded above by
	\begin{align*}
		\sum_{k=1}^n \|\mathrm{X}_k(f)u\|_{L^2(\mathcal{G})}+\|f\mathrm{X}_k(u)\|_{L^2(\mathcal{G})}
		&\leq \sum_{k=1}^n \|f\mathrm{X}_k(u)\|_{L^\infty(\mathcal{G})}\|u\|_{L^2(\mathcal{G})}+\|f\|_{L^\infty(\mathcal{G})}\|\mathrm{X}_ku\|_{L^2(\mathcal{G})}\\
		&= \|f\|_{\dot{W}^{1,\infty}(\mathcal{G})}\|u\|_{L^2(\mathcal{G})}+\|f\|_{L^\infty(\mathcal{G})}\|u\|_{\dot{W}^{1,2}(\mathcal{G})}.
	\end{align*}
	This completes the proof of the  case $s=1$.
\end{proof}

We record the following lemma, the idea of which can be discerned from \cite[Section 5]{MSZ_cwikel}.
\begin{lemma}\label{doi_lemma}
	\begin{enumerate}
		\item{}\label{doi_lemma_1} Let $f \in \dot{W}^{1,\infty}(\mathcal{G})\cap \dot{W}^{2,\mathbb Q}(\mathcal{G}).$
		Then the operator
		\[
		(-\Delta)^{\frac12}[(-\Delta)^{\frac12},M_f](-\Delta)^{-\frac12}
		\]
		is meaningful as a bounded operator on $L^2(\mathcal{G}),$ and moreover
		\[
		\|(-\Delta)^{\frac12}[(-\Delta)^{\frac12},M_{f}](-\Delta)^{-\frac12}\|_{\infty} \lesssim \|f\|_{\dot{W}^{2,\mathbb Q}(\mathcal{G})}+\|f\|_{\dot{W}^{1,\infty}(\mathcal{G})},
		\]
		where the implicit constant is independent of $f.$
		\item{}\label{doi_lemma_2} Similarly, if $f \in \dot{W}^{1,\mathbb Q}(\mathcal{G}),$ then
		\[
		[(-\Delta)^{\frac12},M_f](-\Delta)^{-\frac12}
		\]
		is meaningful as a bounded operator on $L^2(\mathcal{G})$ and belongs to $\SSS^{\mathbb Q,\infty},$ with the norm bound
		\[
		\|[(-\Delta)^{\frac12},M_{f}](-\Delta)^{-\frac12}\|_{\mathbb Q,\infty}\lesssim \|f\|_{\dot{W}^{1,\mathbb Q}(\mathcal{G})},
		\]
		where the implicit constant is independent of $f.$
	\end{enumerate}
\end{lemma}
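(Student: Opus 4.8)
The plan is to prove the two assertions separately, reducing each to the Cwikel-type bounds of Theorem~\ref{cwikel_estimates}\eqref{one_sided_cwikel} together with the boundedness of a suitable double operator integral transformer. Throughout I write $D=(-\Delta)^{1/2}$ and use the elementary rules $[\mathrm{X}_k,M_b]=M_{\mathrm{X}_kb}$, $\mathrm{X}_j=R_jD=-DR_j^{*}$ (the latter from $\mathrm{X}_j^{*}=-\mathrm{X}_j$), and $[-\Delta,M_f]=-\sum_{j}(\mathrm{X}_jM_{\mathrm{X}_jf}+M_{\mathrm{X}_jf}\mathrm{X}_j)$.

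For part~\eqref{doi_lemma_1} I would start from the algebraic identity
\[
D[D,M_f]D^{-1}=[-\Delta,M_f](-\Delta)^{-\frac12}-[D,M_f],
\]
obtained by writing $(-\Delta)M_fD^{-1}=[-\Delta,M_f]D^{-1}+M_fD$ and rearranging; this identity is also what makes the left-hand side meaningful as a bounded operator. Expanding $[-\Delta,M_f]$ and using $\mathrm{X}_jM_{\mathrm{X}_jf}=M_{\mathrm{X}_jf}\mathrm{X}_j+M_{\mathrm{X}_j^{2}f}$ turns the first term into $-\sum_{j}\big(2M_{\mathrm{X}_jf}R_j+M_{\mathrm{X}_j^{2}f}(-\Delta)^{-\frac12}\big)$. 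Here $\|M_{\mathrm{X}_jf}R_j\|_{\infty}\le\|\mathrm{X}_jf\|_{L^{\infty}}\|R_j\|_{\infty}\lesssim\|f\|_{\dot{W}^{1,\infty}(\mathcal{G})}$, while $\|M_{\mathrm{X}_j^{2}f}(-\Delta)^{-\frac12}\|_{\mathbb Q,\infty}\lesssim\|\mathrm{X}_j^{2}f\|_{L^{\mathbb Q}(\mathcal{G})}\le\|f\|_{\dot{W}^{2,\mathbb Q}(\mathcal{G})}$ by Theorem~\ref{cwikel_estimates}\eqref{one_sided_cwikel} with $p=\mathbb Q>2$ (and $\SSS^{\mathbb Q,\infty}\subset\BBB(L^{2}(\mathcal{G}))$); finally $\|[D,M_f]\|_{\infty}\lesssim\|f\|_{\operatorname{Lip}(\mathcal{G})}\approx\|f\|_{\dot{W}^{1,\infty}(\mathcal{G})}$ by Corollary~\ref{square root of delta corollary}. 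Adding the three estimates yields part~\eqref{doi_lemma_1}.

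For part~\eqref{doi_lemma_2} I would use double operator integration. By the L\"owner identity \eqref{lowner_identity} with $\gamma(t)=t^{1/2}$ and $D_0=D_1=-\Delta$ (made literal after the standard regularisation $-\Delta\rightsquigarrow-\Delta+\varepsilon$, with bounds uniform in $\varepsilon$) one has $[D,M_f]=T^{-\Delta,-\Delta}_{(\sqrt\lambda+\sqrt\mu)^{-1}}([-\Delta,M_f])$; absorbing the trailing $D^{-1}$ by \eqref{doi_bimodule}, substituting $[-\Delta,M_f]=-\sum_j(\mathrm{X}_jM_{\mathrm{X}_jf}+M_{\mathrm{X}_jf}\mathrm{X}_j)$, and repeatedly moving powers of $-\Delta$ in and out of the symbol via \eqref{doi_bimodule} (using $\mathrm{X}_j=R_jD$ on one factor and $\mathrm{X}_j=-DR_j^{*}$ on the other) leads to the identity
\[
[D,M_f]D^{-1}=\sum_{j=1}^{n}T^{-\Delta,-\Delta}_{\frac{\sqrt\lambda}{\sqrt\lambda+\sqrt\mu}}\Big(R_j^{*}M_{\mathrm{X}_jf}(-\Delta)^{-\frac12}-(-\Delta)^{-\frac12}M_{\mathrm{X}_jf}R_j\Big),
\]
which — the symbol now being bounded — can be justified rigorously by first taking $f\in C_0^{\infty}(\mathcal{G})$ and then passing to the limit. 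Each operator in parentheses lies in $\SSS^{\mathbb Q,\infty}$ with norm $\lesssim\|\mathrm{X}_jf\|_{L^{\mathbb Q}(\mathcal{G})}$, since $M_{\mathrm{X}_jf}(-\Delta)^{-\frac12}\in\SSS^{\mathbb Q,\infty}$ by Theorem~\ref{cwikel_estimates}\eqref{one_sided_cwikel}, its adjoint $(-\Delta)^{-\frac12}M_{\mathrm{X}_jf}$ likewise, and $R_j,R_j^{*}$ are bounded. It then remains to bound $T^{-\Delta,-\Delta}_{\frac{\sqrt\lambda}{\sqrt\lambda+\sqrt\mu}}$ on $\SSS^{\mathbb Q,\infty}$: since $\frac{\sqrt\lambda}{\sqrt\lambda+\sqrt\mu}=h(\log\lambda-\log\mu)$ with $h(u)=(1+e^{-u/2})^{-1}$, identity \eqref{doi_cov} rewrites this transformer as $T^{\log(-\Delta),\log(-\Delta)}_{h(s-t)}$; because $h$ is smooth and bounded with all derivatives decaying exponentially at $\pm\infty$, it is an $L^{p}(\mathbb{R})$ Fourier multiplier for every $1<p<\infty$, so the transformer is bounded on $\SSS^{p}$ for every such $p$, hence on $\SSS^{\mathbb Q,\infty}$ by real interpolation. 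Summing over $j$ and using the ideal property of $\SSS^{\mathbb Q,\infty}$ completes the proof.

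The hard part is part~\eqref{doi_lemma_2}, specifically the passage through the formally singular double operator integral with symbol $(\sqrt\lambda+\sqrt\mu)^{-1}$, which is not even bounded near the origin: one must reorganise the algebra so that, once the powers of $-\Delta$ have been redistributed, the only transformer left has the bounded symbol $\frac{\sqrt\lambda}{\sqrt\lambda+\sqrt\mu}$, and then one must recognise that — although this transformer is \emph{not} bounded on $\BBB(L^{2}(\mathcal{G}))$ (its symbol is genuinely discontinuous at $(0,0)$, so no $L^{\infty}$-type estimate in the spirit of \eqref{am_gm_inequality} is available) — it \emph{is} bounded on $\SSS^{p}$ for $1<p<\infty$, hence on $\SSS^{\mathbb Q,\infty}$. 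Legitimising the intermediate steps (either by the $\varepsilon$-regularisation above with uniform bounds, or by running the computation from the resolvent representation $D=\tfrac{2}{\pi}\int_{0}^{\infty}(-\Delta)(-\Delta+s^{2})^{-1}\,ds$ and invoking the $\SSS^{p}$-bounded transformers only at the end) is the remaining technical point; the rest is bookkeeping with \eqref{doi_bimodule} and \eqref{doi_cov}.
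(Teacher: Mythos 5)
Your part \eqref{doi_lemma_1} is essentially the paper's proof verbatim: the same identity $(-\Delta)^{\frac12}[(-\Delta)^{\frac12},M_f](-\Delta)^{-\frac12}=[-\Delta,M_f](-\Delta)^{-\frac12}-[(-\Delta)^{\frac12},M_f]$, the same Leibniz expansion producing $-\sum_k\bigl(2M_{\mathrm{X}_kf}R_k+M_{\mathrm{X}_k^2f}(-\Delta)^{-\frac12}\bigr)$, the Cwikel bound of Theorem \ref{cwikel_estimates}.\eqref{one_sided_cwikel} for the second-derivative term, and Corollary \ref{square root of delta corollary} for $[(-\Delta)^{\frac12},M_f]$. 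In part \eqref{doi_lemma_2} you genuinely diverge: the paper simply quotes \cite[Lemma 4.4]{FLMSZ} to reduce $\|[(-\Delta)^{\frac12},M_f](-\Delta)^{-\frac12}\|_{\mathbb Q,\infty}$ to $\|(-\Delta)^{-\frac12}[\Delta,M_f](-\Delta)^{-\frac12}\|_{\mathbb Q,\infty}$, and then performs exactly the algebra and Cwikel estimates you give (the terms $R_k^*M_{\mathrm{X}_kf}(-\Delta)^{-\frac12}$ and $(-\Delta)^{-\frac12}M_{\mathrm{X}_kf}R_k$). You instead re-derive that reduction from scratch: by \eqref{lowner_identity} and \eqref{doi_bimodule}, $[(-\Delta)^{\frac12},M_f](-\Delta)^{-\frac12}$ equals, up to sign, $T^{-\Delta,-\Delta}_{\sqrt\lambda/(\sqrt\lambda+\sqrt\mu)}\bigl((-\Delta)^{-\frac12}[\Delta,M_f](-\Delta)^{-\frac12}\bigr)$, and you argue that this transformer is bounded on $\SSS^p$ for $1<p<\infty$, hence on $\SSS^{\mathbb Q,\infty}$ by interpolation; your algebra and signs check out. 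This is correct and is essentially how the cited lemma of \cite{FLMSZ} is proved, so your route buys self-containedness, at the cost of two technical points you flag but do not discharge: the step from $h(u)=(1+e^{-u/2})^{-1}$ being a Mikhlin multiplier on $L^p(\mathbb R)$ to the $\SSS^p$-boundedness of the Toeplitz-form transformer with symbol $h(\log\lambda-\log\mu)$ is itself a nontrivial transference result of Potapov--Sukochev type (cf.\ \cite{PS-crelle}) and needs a citation just as much as the lemma the paper invokes; and the regularisation at the bottom of the spectrum (the intermediate symbol $(\sqrt\lambda+\sqrt\mu)^{-1}$ is singular at the origin while $0$ lies in the spectrum of $-\Delta$ with no gap) must actually be carried out with bounds uniform in $\varepsilon$. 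Your observation that the $\sqrt\lambda/(\sqrt\lambda+\sqrt\mu)$ transformer is not bounded on $\BBB(H)$, so no \eqref{am_gm_inequality}-style decomposition is available, correctly identifies why the estimate is naturally a Schatten-class rather than uniform one; the paper's choice to outsource precisely this point keeps its proof short but opaque, whereas yours makes the mechanism explicit.
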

\begin{proof}
	We have
	\begin{equation}\label{doi_lemma_decomposition}
		(-\Delta)^{\frac12}[(-\Delta)^{\frac12},M_{f}](-\Delta)^{-\frac12} =[-	\Delta,M_{f}](-\Delta)^{-\frac12}-[(-\Delta)^{\frac12},M_{f}].
	\end{equation}
	Writing $\Delta$ as $\sum_{k=1}^n \mathrm{X}_k^2$ and applying the Leibniz rule
	yields
	\begin{align*}
		[-\Delta,M_f](-\Delta)^{-\frac12} &= \sum_{k=1}^n -2[\mathrm{X}_k,M_f]\mathrm{X}_k(-\Delta)^{-\frac12}-[\mathrm{X}_k,[\mathrm{X}_k,M_f]](-\Delta)^{-\frac12}\\
		&= \sum_{k=1}^n -2M_{\mathrm{X}_kf}R_k-M_{\mathrm{X}_k^2f}(-\Delta)^{-\frac12}.
	\end{align*}
	So that
	\[
	\|[-\Delta,M_f](-\Delta)^{-\frac12}\|_{\infty} \leq 2\sum_{k=1}^n \|\mathrm{X}_kf\|_{\infty}\|R_k\|_{L^2(\mathcal{G})\to L^2(\mathcal{G})}+\|M_{\mathrm{X}_k^2f}(-\Delta)^{-\frac12}\|_{\infty}.
	\]
	Bounding the operator norm by the $\SSS^{\mathbb Q,\infty}$ quasinorm and applying (1) in Theorem \ref{cwikel_estimates}  yields
	\[
	\|[-\Delta,M_f](-\Delta)^{-\frac12}\|_{\infty} \leq 2\sum_{k=1}^n \|\mathrm{X}_kf\|_{\infty}\|R_k\|_{L^2(\mathcal{G})\to L^2(\mathcal{G})}+\|M_{\mathrm{X}_k^2f}(-\Delta)^{-\frac12}\|_{\infty} \lesssim \|f\|_{\dot{W}^{1,\infty}(\mathcal{G})}+\|f\|_{\dot{W}^{2,\mathbb Q}(\mathcal{G})}.
	\]
	This proves that the first term on the right hand side of \eqref{doi_lemma_decomposition} is bounded, with the desired norm bound.

	The second term on the right hand side of \eqref{doi_lemma_decomposition} is the commutator $[(-\Delta)^{\frac12},M_f],$ which is bounded on $L^2(\mathcal{G})$ for $f \in \operatorname{Lip}(\mathcal{G}),$ by Corollary \ref{square root of delta corollary}. Since $\|f\|_{\dot{W}^{1,\infty}(\mathcal{G})}\leq \|f\|_{\operatorname{Lip}(\mathcal{G})},$ this completes the proof of \eqref{doi_lemma_1}.

	To prove \eqref{doi_lemma_2}, we instead appeal to \cite[Lemma 4.4]{FLMSZ},
	which implies that
	\[
	\|[(-\Delta)^{\frac12},M_f](-\Delta)^{-\frac12}\|_{\mathbb Q,\infty} \lesssim \|(-\Delta)^{-\frac12}[\Delta,M_f](-\Delta)^{-\frac12}\|_{\mathbb Q,\infty}.
	\]

	We bound the right hand side as follows:
	\begin{align*}
		(-\Delta)^{-\frac12}[\Delta,M_f](-\Delta)^{-\frac12} &= \sum_{k=1}^n (-\Delta)^{-\frac12}\mathrm{X}_k[\mathrm{X}_k,M_f](-\Delta)^{-\frac12}+(-\Delta)^{-\frac12}[\mathrm{X}_k,M_f]\mathrm{X}_k(-\Delta)^{-\frac12}\\
		&= \sum_{k=1}^n R_k^*M_{\mathrm{X}_{k}f}(-\Delta)^{-\frac12}+(-\Delta)^{-\frac12}M_{\mathrm{X}_kf}R_k^*.
	\end{align*}
	The quasi-triangle inequality and Theorem \ref{cwikel_estimates}.\eqref{one_sided_cwikel} yield
	\[
	\|(-\Delta)^{-\frac12}[\Delta,M_{f}](-\Delta)^{-\frac12}\|_{\mathbb Q,\infty} \lesssim \|f\|_{\dot{W}^{1,\mathbb Q}(\mathcal{G})}.
	\]
	This completes the proof of \eqref{doi_lemma_2}.
\end{proof}

One of the main results of \cite{FLMSZ} is that if $f \in \dot{W}^{1,\mathbb Q}(\mathcal{G}),$ then $[R_k,M_f]\in\SSS^{\mathbb Q,\infty},$ with the quasinorm bound
\begin{equation}\label{FLMSZ_sufficiency}
	\|[R_k,M_f]\|_{\mathbb Q,\infty} \lesssim \|f\|_{\dot{W}^{1,\mathbb Q}(\mathcal{G})}.
\end{equation}
This was only proved in \cite{FLMSZ} when $\mathcal{G}$ is the Heisenberg group, however the proof of \eqref{FLMSZ_sufficiency} works equally well in any stratified Lie group. Indeed, writing $R_k$ as $\mathrm{X}_k(-\Delta)^{-\frac12}$ and applying the Leibniz rule yields
\[
[R_k,M_f]=M_{\mathrm{X}_kf}(-\Delta)^{-\frac12}+\mathrm{X}_k[(-\Delta)^{-\frac12},M_f] = M_{\mathrm{X}_kf}(-\Delta)^{-\frac12}-R_k[(-\Delta)^{\frac12},M_f](-\Delta)^{-\frac12}.
\]
A combination of Theorem \ref{cwikel_estimates}.\eqref{one_sided_cwikel} and Lemma \ref{doi_lemma}.\eqref{doi_lemma_2} proves \eqref{FLMSZ_sufficiency}.

The following lemma is a variation on that result. We note that it is precisely at this point we assume that $\mathcal{G}$ is two step.
\begin{lemma}\label{second_order_riesz_commutator_lemma}
	Let $1\leq k_1,k_2\leq n,$ and let $f \in \dot{W}^{1,\mathbb Q}(\mathcal{G}).$
	We have
	\[
	\|[M_f,\mathrm{X}_{k_1}\mathrm{X}_{k_2}(-\Delta)^{-1}]\|_{\mathbb Q,\infty} \lesssim \|f\|_{\dot{W}^{1,\mathbb Q}(\mathcal{G})}.
	\]
\end{lemma}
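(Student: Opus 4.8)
The plan is to reduce the second--order Riesz commutator to \emph{first}--order Riesz commutators, for which \eqref{FLMSZ_sufficiency} is available, together with the Cwikel estimates of Theorem \ref{cwikel_estimates}; the two--step hypothesis enters through the fact that each bracket $T_{\ell m}:=[\mathrm{X}_\ell,\mathrm{X}_m]$ is a \emph{central} left--invariant vector field homogeneous of degree $2$, which keeps all the auxiliary operators below of non--negative order. In particular the convolution operators $W_{\ell m}:=T_{\ell m}(-\Delta)^{-1}$ $(1\le \ell,m\le n)$ are Calder\'on--Zygmund operators of order $0$ whose kernels satisfy estimates of the type \eqref{kernel} (this is again the classical heat--kernel estimate, now for a degree--$0$ convolution operator), and in particular each $W_{\ell m}$ is bounded on $L^2(\mathcal{G}).$

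The first step is an algebraic identity, valid on a dense domain and extended by continuity, that isolates the first--order pieces. Using $R_k^{*}=-(-\Delta)^{-\frac12}\mathrm{X}_k$, the resolvent identity, the commutation relation $[\Delta,\mathrm{X}_m]=2\sum_{\ell=1}^{n}T_{\ell m}\mathrm{X}_\ell$ (this is precisely where two--step is used) and centrality of $T_{\ell m}$, one computes
\[
	(-\Delta)^{\frac12}\mathrm{X}_m(-\Delta)^{-1}=-R_m^{*}+2\sum_{\ell=1}^{n}R_\ell^{*}W_{\ell m},
\]
and hence, multiplying on the left by $R_{k_1}=\mathrm{X}_{k_1}(-\Delta)^{-\frac12}$,
\[
	\mathrm{X}_{k_1}\mathrm{X}_{k_2}(-\Delta)^{-1}=-R_{k_1}R_{k_2}^{*}+2\sum_{\ell=1}^{n}R_{k_1}R_\ell^{*}\,W_{\ell k_2}.
\]
Thus $\mathrm{X}_{k_1}\mathrm{X}_{k_2}(-\Delta)^{-1}$ is a finite sum of products of the \emph{bounded} operators $R_{k_1},R_\ell^{*},W_{\ell k_2}.$ Applying $[M_f,\,\cdot\,]$, expanding by the Leibniz rule, and using that $\SSS^{\mathbb Q,\infty}$ is a two--sided ideal closed under adjoints, every summand has one of the forms $C_1[M_f,R_j]C_2$, $C_1[M_f,R_j^{*}]C_2$ or $C_1[M_f,W_{\ell k_2}]C_2$ with $C_1,C_2$ bounded. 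By \eqref{FLMSZ_sufficiency} and its adjoint the first two families lie in $\SSS^{\mathbb Q,\infty}$ with quasinorm $\lesssim\|f\|_{\dot W^{1,\mathbb Q}(\mathcal{G})}$, so the lemma reduces to proving $\|[M_f,W_{\ell m}]\|_{\mathbb Q,\infty}\lesssim\|f\|_{\dot W^{1,\mathbb Q}(\mathcal{G})}$ for all $1\le\ell,m\le n.$

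For this last estimate I would expand once more, writing $[M_f,W_{\ell m}]=[M_f,T_{\ell m}](-\Delta)^{-1}\pm W_{\ell m}[\Delta,M_f](-\Delta)^{-1}$ and then $[\Delta,M_f]=\sum_k(\mathrm{X}_kM_{\mathrm{X}_kf}+M_{\mathrm{X}_kf}\mathrm{X}_k)$ together with the Leibniz expansion of $[M_f,T_{\ell m}]$ into terms of the shape $\pm\mathrm{X}_iM_{\mathrm{X}_jf}$ and $\pm M_{\mathrm{X}_if}\mathrm{X}_j$, \emph{never} combining two derivatives of $f$ into a single multiplier. This yields a finite sum of terms $A\,M_{\mathrm{X}_kf}\,B$ in which $A,B$ are products of $W_{pq},(-\Delta)^{-1},(-\Delta)^{-\frac12}$ and single left--invariant vector fields (degree $1$, or central of degree $2$): in each such term every bare degree--$1$ field is absorbed into an adjacent $(-\Delta)^{-\frac12}$ to make a bounded $R_j$ or $R_j^{*}$, every bare central field into an adjacent $(-\Delta)^{-1}$ to make a bounded $W_{pq}$, every occurrence of $(-\Delta)^{\frac12}\mathrm{X}_j(-\Delta)^{-1}$ is rewritten by the identity above, and what survives next to $M_{\mathrm{X}_kf}$ is a single factor $(-\Delta)^{-\frac12}.$ Since $\mathrm{X}_kf\in L^{\mathbb Q}(\mathcal{G})$ and $\mathbb Q>2$, Theorem \ref{cwikel_estimates}.\eqref{one_sided_cwikel} and its adjoint give $M_{\mathrm{X}_kf}(-\Delta)^{-\frac12},\,(-\Delta)^{-\frac12}M_{\mathrm{X}_kf}\in\SSS^{\mathbb Q,\infty}$ with quasinorm $\lesssim\|\mathrm{X}_kf\|_{L^{\mathbb Q}(\mathcal{G})}\le\|f\|_{\dot W^{1,\mathbb Q}(\mathcal{G})}$, and the ideal property concludes. (Equivalently, since $W_{\ell m}$ is a Calder\'on--Zygmund operator with the regularity \eqref{kernel}, one may bound $[M_f,W_{\ell m}]$ directly by the double--operator--integral argument used for \eqref{FLMSZ_sufficiency}.)

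The hard part is entirely organisational: arranging the repeated Leibniz expansions so that no second derivative of $f$ is ever produced and so that every ``bare'' left--invariant field --- \emph{including} the degree--$2$ central ones coming from $[\Delta,\mathrm{X}_m]$ --- is paired with a negative power of $-\Delta$. This is exactly what the two--step hypothesis secures: it forces $[\Delta,\mathrm{X}_m]$ to have central, hence degree--$2$, coefficients, so that all intermediate operators remain of order $\ge0$ and only the single factor $(-\Delta)^{-\frac12}M_{\mathrm{X}_kf}$ is left for Cwikel. One minor point to watch: a \emph{literal} expansion $W_{\ell m}=\mathrm{X}_\ell\mathrm{X}_m(-\Delta)^{-1}-\mathrm{X}_m\mathrm{X}_\ell(-\Delta)^{-1}$ would make $[M_f,W_{\ell m}]$ refer back to the very second--order commutators under study; this feedback is avoided by treating $W_{\ell m}=T_{\ell m}(-\Delta)^{-1}$ throughout as a single Calder\'on--Zygmund operator, or alternatively by noting that the family $\{[M_f,\mathrm{X}_i\mathrm{X}_j(-\Delta)^{-1}]\}_{i,j}$ is bounded on $L^2(\mathcal{G})$ a priori (being commutators of Calder\'on--Zygmund operators with a symbol in $\dot W^{1,\mathbb Q}(\mathcal{G})\subset\mathrm{VMO}(\mathcal{G})$) and solving the resulting finite linear system.
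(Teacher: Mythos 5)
Your reduction of $[M_f,\mathrm{X}_{k_1}\mathrm{X}_{k_2}(-\Delta)^{-1}]$ to first--order Riesz commutators plus the commutators $[M_f,W_{\ell m}]$ with $W_{\ell m}=T_{\ell m}(-\Delta)^{-1}$ is exactly the paper's first step (same resolvent identity, same use of $[\Delta,\mathrm{X}_m]=2\sum_{\ell}T_{\ell m}\mathrm{X}_\ell$ and of centrality). The gap is in your proposed proof of the remaining estimate $\|[M_f,W_{\ell m}]\|_{\mathbb Q,\infty}\lesssim\|f\|_{\dot{W}^{1,\mathbb Q}(\mathcal{G})}$. Expanding $[M_f,W_{\ell m}]=[M_f,T_{\ell m}](-\Delta)^{-1}-W_{\ell m}[\Delta,M_f](-\Delta)^{-1}$ and applying the Leibniz rule while never producing a second derivative of $f$ inevitably creates terms such as $\mathrm{X}_\ell M_{\mathrm{X}_m f}(-\Delta)^{-1}$ (from $[M_f,T_{\ell m}](-\Delta)^{-1}$) and $W_{\ell m}\mathrm{X}_k M_{\mathrm{X}_k f}(-\Delta)^{-1}$ (from the second piece), in which a bare degree--one field sits immediately against the multiplier with no negative power of $-\Delta$ on its other side; your absorption rule has nothing to act on there. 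Commuting $\mathrm{X}_\ell$ past $M_{\mathrm{X}_m f}$ produces $M_{\mathrm{X}_\ell\mathrm{X}_m f}$, i.e.\ requires $f\in\dot{W}^{2,\mathbb Q}(\mathcal{G})$, while inserting $(-\Delta)^{-\frac12}(-\Delta)^{\frac12}$ leaves the factor $(-\Delta)^{\frac12}M_{\mathrm{X}_m f}(-\Delta)^{-1}$, whose commutator part $[(-\Delta)^{\frac12},M_{\mathrm{X}_m f}](-\Delta)^{-1}$ again needs $\mathrm{X}_m f\in\dot{W}^{1,\mathbb Q}(\mathcal{G})$ by Lemma \ref{doi_lemma}, not merely $\mathrm{X}_m f\in L^{\mathbb Q}(\mathcal{G})$. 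Equivalently, $[M_f,T_{\ell m}]=-M_{T_{\ell m}f}$ involves second horizontal derivatives of $f$, so this splitting cannot close under the hypothesis of the lemma. The parenthetical fallback (run the argument for \eqref{FLMSZ_sufficiency} with $W_{\ell m}$ in place of $R_k$) is not available either: that argument exploits the specific factorisation $R_k=\mathrm{X}_k(-\Delta)^{-\frac12}$ together with Lemma \ref{doi_lemma}, and no analogue for a general order--zero Calder\'on--Zygmund operator is established in the paper.

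The repair is one line, and it is what the paper does: use centrality of $T_{\ell m}$ to commute it halfway through $(-\Delta)^{-1}$, namely $W_{\ell m}=(-\Delta)^{-\frac12}T_{\ell m}(-\Delta)^{-\frac12}=(-\Delta)^{-\frac12}[\mathrm{X}_\ell,\mathrm{X}_m](-\Delta)^{-\frac12}=-R_\ell^{*}R_m+R_m^{*}R_\ell$. With this identity $\mathrm{X}_{k_1}\mathrm{X}_{k_2}(-\Delta)^{-1}$ becomes a finite sum of products of Riesz transforms and their adjoints (this also gives the $L^2$ boundedness of $W_{\ell m}$ for free), so $[M_f,\cdot]$ of it is, by the Leibniz rule and the ideal property of $\SSS^{\mathbb Q,\infty}$, controlled by \eqref{FLMSZ_sufficiency} and its adjoint form alone; no Cwikel estimate and no second expansion of $[M_f,W_{\ell m}]$ is needed. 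Substituting this step, your argument coincides with the paper's proof.
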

\begin{proof}
	We decompose the operator $\mathrm{X}_{k_1}\mathrm{X}_{k_2}(-\Delta)^{-1}$ as follows:
	\begin{equation}\label{initial_decomposition_of_double_riesz}
		\mathrm{X}_{k_1}\mathrm{X}_{k_2}(-\Delta)^{-1} = \mathrm{X}_{k_1}(-\Delta)^{-1}\mathrm{X}_{k_2}-\mathrm{X}_{k_1}(-\Delta)^{-1}[-\Delta,\mathrm{X}_{k_2}](-\Delta)^{-1}.
	\end{equation}
	Writing $\Delta=\sum_{k=1}^n \mathrm{X}_k^2,$ the commutator of $\Delta$ with $\mathrm{X}_{k_2}$ is
	\[
	[\Delta,\mathrm{X}_{k_2}] = \sum_{k=1}^n [\mathrm{X}_k^2,\mathrm{X}_{k_2}] = \sum_{k=1}^n \mathrm{X}_k[\mathrm{X}_k,\mathrm{X}_{k_2}]+[\mathrm{X}_k,\mathrm{X}_{k_2}]\mathrm{X}_k.
	\]
	Since $\mathcal{G}$ is two step, the commutator
	\[
	\mathrm{T}_{k,k_2} := [\mathrm{X}_k,\mathrm{X}_{k_2}]
	\]
	is central.
	
	Using this notation, we have
	\[
	[\Delta,\mathrm{X}_{k_2}] = \sum_{k=1}^n 2\mathrm{X}_k\mathrm{T}_{k,k_2}.
	\]
	Applying this to \eqref{initial_decomposition_of_double_riesz} yields
	\begin{align*}
		\mathrm{X}_{k_1}\mathrm{X}_{k_2} &= -R_{k_1}R_{k_2}^{\ast}- 2\sum_{k=1}^n \mathrm{X}_{k_2}(-\Delta)^{-1}\mathrm{X}_{k}\mathrm{T}_{k,k_n}(-\Delta)^{-1}\\
		&= R_{k_1}R_{k_2}^{\ast}- 2R_{k_1}\sum_{k=1}^n R_{k}^{\ast}\cdot \mathrm{T}_{k,k_2}(-\Delta)^{-1}.
	\end{align*}		
	However, since $\mathrm{T}_{k,k_2}$ is central, we have
	$$\mathrm{T}_{k,k_2}(-\Delta)^{-1}=(-\Delta)^{-1/2}\mathrm{T}_{k,k_2}(-\Delta)^{-1/2}=
	(-\Delta)^{-1/2}[\mathrm{X}_k,\mathrm{X}_{k_2}](-\Delta)^{-1/2}
	=-R_k^{\ast}R_{k_2}+R_{k_2}^{\ast}R_k.$$
	
	Thus,
	\[
	\mathrm{X}_{k_1}\mathrm{X}_{k_2}(-\Delta)^{-1} = R_{k_1}R_{k_2}^*-2\sum_{k=1}^n R_{k_1}R_k^*(-R_k^*R_{k_2}+R_{k_2}^*R_k).
	\]		
	It follows from this decomposition, \eqref{FLMSZ_sufficiency} and the Leibniz rule that $[M_f,\mathrm{X}_i\mathrm{X}_j(-\Delta)^{-1}]$ belongs to $\SSS^{\mathbb Q,\infty},$ with quasinorm bounded by $\|f\|_{\dot{W}^{1,\mathbb Q}(\mathcal{G})}.$
\end{proof}

\begin{proof}[Proof of Theorem \ref{main3}]
	Assume initially that $b_0,b_1 \in C^\infty_c(\mathcal{G}).$ We compute the double commutator $[M_{b_1},[M_{b_2},\mathrm{X}_{k_1}R_{k_2}]]$
	as follows. First, the inner commutator is written as
	\[
	[M_{b_2},\mathrm{X}_{k_1}\mathrm{X}_{k_2}(-\Delta)^{-\frac12}] = M_{\mathrm{X}_{k_1}b_2}R_{k_2} + \mathrm{X}_{k_1}M_{\mathrm{X}_{k_2}b_2}(-\Delta)^{-\frac12}+\mathrm{X}_{k_1}R_{k_2}[(-\Delta)^{\frac12},M_{b_2}](-\Delta)^{-\frac12}.
	\]
	Then,
	\begin{align*}
		[M_{b_1},[M_{b_2},\mathrm{X}_{k_1}\mathrm{X}_{k_2}(-\Delta)^{-\frac12}]]&=M_{\mathrm{X}_{k_1}b_2}[M_{b_1},R_{k_2}]\\
		&\quad+M_{\mathrm{X}_{k_1}b_1}M_{\mathrm{X}_{k_2}b_2}(-\Delta)^{-\frac12}\\
		&\quad+\mathrm{X}_{k_1}M_{\mathrm{X}_{k_2}b_2}(-\Delta)^{-\frac12}\cdot [(-\Delta)^{\frac12},M_{b_1}](-\Delta)^{-\frac12}\\
		&\quad+[M_{b_1},\mathrm{X}_{k_1}R_{k_2}](-\Delta)^{-\frac12}\cdot(-\Delta)^{\frac12}[(-\Delta)^{\frac12},M_{b_2}](-\Delta)^{-\frac12}.
	\end{align*}
	Next, by noting that 
	\begin{align*}
		&[M_{b_1},\mathrm{X}_{k_1}R_{k_2}](-\Delta)^{-\frac12}\\
		&\quad=[M_{b_1},\mathrm{X}_{k_1}R_{k_2}(-\Delta)^{-\frac12}]-\mathrm{X}_{k_1}R_{k_2}[M_{b_1},(-\Delta)^{-\frac12}]\\
		&\quad=[M_{b_1},\mathrm{X}_{k_1}R_{k_2}(-\Delta)^{-\frac12}]-\mathrm{X}_{k_1}R_{k_2}(-\Delta)^{-\frac12}\cdot [(-\Delta)^{\frac12},M_{b_1}](-\Delta)^{-\frac12},
	\end{align*}
	we obtain 
	\begin{align*}
		&[M_{b_1},[M_{b_2},\mathrm{X}_{k_1}\mathrm{X}_{k_2}(-\Delta)^{-\frac12}]]\\
		&=M_{\mathrm{X}_{k_1}b_2}[M_{b_1},R_{k_2}]
		+M_{\mathrm{X}_{k_2}b_2}\cdot M_{\mathrm{X}_{k_1}b_1}(-\Delta)^{-\frac12}\\
		&\quad+\mathrm{X}_{k_1}M_{\mathrm{X}_{k_2}b_2}(-\Delta)^{-\frac12}\cdot [(-\Delta)^{\frac12},M_{b_1}](-\Delta)^{-\frac12}\\
		&\quad+[M_{b_1},\mathrm{X}_{k_1}R_{k_2}(-\Delta)^{-\frac12}]\cdot(-\Delta)^{\frac12}[(-\Delta)^{\frac12},M_{b_2}](-\Delta)^{-\frac12}\\
		&\quad-\mathrm{X}_{k_1}R_{k_2}(-\Delta)^{-\frac12}\cdot [(-\Delta)^{\frac12},M_{b_1}](-\Delta)^{-\frac12}\cdot (-\Delta)^{\frac12}[(-\Delta)^{\frac12},M_{b_2}](-\Delta)^{-\frac12}.
	\end{align*}
	
	By the (quasi) triangle inequality for $\SSS^{\mathbb Q,\infty},$ it follows that
	\begin{align*}
		&\|[M_{b_1},[M_{b_2},\mathrm{X}_{k_1}\mathrm{X}_{k_2}(-\Delta)^{-\frac12}]]\|_{\SSS^{\mathbb Q,\infty}}\\
		&\leq2\|M_{\mathrm{X}_{k_1}b_2}\|_{\infty}\|[M_{b_1},R_{k_2}]\|_{\SSS^{\mathbb Q,\infty}}
		+2\|M_{\mathrm{X}_{k_2}b_2}\|_{\infty}\|M_{\mathrm{X}_{k_1}b_1}(-\Delta)^{-\frac12}\|_{\SSS^{\mathbb Q,\infty}}\\
		&\quad
		+2\|\mathrm{X}_{k_1}M_{\mathrm{X}_{k_2}b_2}(-\Delta)^{-\frac12}\|_{\infty}\|[(-\Delta)^{\frac12},M_{b_1}](-\Delta)^{-\frac12}\|_{\SSS^{\mathbb Q,\infty}}\\
		&\quad
		+2\|[M_{b_1},\mathrm{X}_{k_1}\mathrm{X}_{k_2}\Delta^{-1}]\|_{\SSS^{\mathbb Q,\infty}}\|(-\Delta)^{\frac12}[(-\Delta)^{\frac12},M_{b_2}](-\Delta)^{-\frac12}\|_{\infty}\\
		&\quad
		+2\|\mathrm{X}_{k_1}R_{k_2}(-\Delta)^{-\frac12}\|_{\infty}\cdot \|[(-\Delta)^{\frac12},M_{b_1}](-\Delta)^{-\frac12}\|_{\SSS^{\mathbb Q,\infty}}\cdot\| (-\Delta)^{\frac12}[(-\Delta)^{\frac12},M_{b_2}](-\Delta)^{-\frac12}\|_{\infty}.
	\end{align*}

	To continue, we note that, applying \eqref{FLMSZ_sufficiency}, we have
	$$\|M_{\mathrm{X}_{k_1}b_2}\|_{\infty}\|[M_{b_1},R_{k_2}]\|_{\SSS^{\mathbb Q,\infty}}\leq\|b_2\|_{\dot{W}^{1,\infty}}\|b_1\|_{\dot{W}^{1,\mathbb Q}}.$$
	Secondly, by Theorem \ref{cwikel_estimates}.\eqref{one_sided_cwikel}, we have
	$$\|M_{\mathrm{X}_{k_2}b_2}\|_{\infty}\|M_{\mathrm{X}_{k_1}b_1}(-\Delta)^{-\frac12}\|_{\SSS^{\mathbb Q,\infty}}\lesssim\|b_2\|_{\dot{W}^{1,\infty}}\|b_1\|_{\dot{W}^{1,\mathbb Q}},$$
	and
	\begin{align*}
		\|\mathrm{X}_{k_1}M_{\mathrm{X}_{k_2}b_2}(-\Delta)^{-\frac12}\|_{\infty}&\lesssim \|M_{\mathrm{X}_{k_1}\mathrm{X}_{k_2}b_2}(-\Delta)^{-\frac12}\|_{\infty}+\|M_{\mathrm{X}_{k_2}b_2}R_{k_1}\|_{\infty}\\
		&\lesssim\|b_2\|_{\dot{W}^{2,\mathbb Q}(\mathcal{G})}+\|b_2\|_{\dot{W}^{1,\infty}(\mathcal{G})}.
	\end{align*}

	By Lemma \ref{doi_lemma}.\eqref{doi_lemma_2}, we also have
	$$\|[(-\Delta)^{\frac12},M_{b_1}](-\Delta)^{-\frac12}\|_{\SSS^{\mathbb Q,\infty}}\leq \|b_1\|_{\dot{W}^{2,\mathbb Q}(\mathcal{G})}.$$

	In addition, Lemma \ref{doi_lemma}.\eqref{doi_lemma_1} delivers the estimate
	\[
	\|(-\Delta)^{\frac12}[(-\Delta)^{\frac12},M_{b_2}](-\Delta)^{-\frac12}\|_{\infty} \lesssim \|b_2\|_{\dot{W}^{1,\infty}(\mathcal{G})}+\|b_2\|_{\dot{W}^{2,\mathbb Q}(\mathcal{G})}.
	\]
	Lemma \ref{second_order_riesz_commutator_lemma} implies that
	\begin{align}\label{AIM in weak Sd}
		\|[M_{b_1},\mathrm{X}_{k_1}\mathrm{X}_{k_2}\Delta^{-1}]\|_{\SSS^{\mathbb Q,\infty}}\lesssim\|b_1\|_{\dot{W}^{1,\mathbb Q}(\mathcal{G})}. 
	\end{align}
	Combining all the preceding estimates, we arrive at
	$$\|[M_{b_1},[M_{b_2},\mathrm{X}_{k_1}\mathrm{X}_{k_2}(-\Delta)^{-\frac12}]]\|_{\SSS^{\mathbb Q,\infty}}\lesssim\|b_1\|_{\dot{W}^{1,\mathbb Q}(\mathcal{G})}(\|b_2\|_{\dot{W}^{1,\infty}(\mathcal{G})}+\|b_2\|_{\dot{W}^{2,\mathbb Q}(\mathcal{G})}).$$

	Finally, we must remove the assumption that $b_1,b_2 \in C^\infty_c(\mathcal{G}).$ This
	can be achieved by using the density of $C^\infty_c(\mathcal{G})$ in $\dot{W}^{1,\mathbb Q}(\mathcal{G})$, following identical reasoning to the end of the proof of \cite[Theorem 4.5]{FLMSZ}.
\end{proof}

\bigskip\bigskip
\noindent{\bf Acknowledgement:}\ 
Yanping Chen is supported by
National Natural Science Foundation of China (\# 11871096, \# 11471033).
Ji Li is supported by ARC DP 220122085. Dmitriy Zanin is supported by ARC DP 230100434.

\bigskip

\end{document}